
\documentclass[finite]{siamltex}
\usepackage{amssymb}
\usepackage{amsfonts,amsmath,amscd}
\usepackage{latexsym}
\usepackage{euscript}
\usepackage[T2A]{fontenc}
\usepackage[cp1251]{inputenc}
\usepackage[english]{babel}
\usepackage{multicol}
\usepackage{graphicx}



\title{{STEPWISE  SYNTHESIS OF CONSTRAINED CONTROLS FOR SINGLE INPUT NONLINEAR SYSTEMS OF SPECIAL FORM}
\thanks{The work was partially supported  by Polish Ministry of Science and High Education grant
\mbox{N N514 238438.}}}


\author{Korobov~V.~I., Sklyar~K.~V.\thanks{Institute of Mathematics, Szczecin University,
        Wielkopolska str. 15, Szczecin 70-451,  Poland ({\tt$\{$korobow,sklar$\}$@univ.szczecin.pl}),}
        \and Skoryk~V.~O.\thanks{V.~N.~Karazin Kharkov National University,
         sq. Svobody, 4, 61077, Kharkov, Ukraine ({\tt
         $\{$vkorobov,skoryk$\}$@univer.kharkov.ua}).}}

\usepackage{subfigure}
\usepackage{multicol}
\usepackage{float}

\begin{document}
\newcommand{\ad}{{\rm ad }}
\maketitle

\begin{abstract} The controllability problem for  nonlinear control systems with  one-dimensional
control of the form  $ dx/dt=a(x)+B(x)\beta(x,u)$ is considered,
where $a(x)$ is an $n$-dimensional vector function, $B(x)$ is an
$(n\times m)$-matrix,  and $\beta(x,u)$ is an $m$-dimensional
vector function.  Under certain conditions we reduce such system
to a system    consisting of  $m$ subsystems; in each subsystem
all equations are linear  except of the last one. We use the
controllability function method  to give   sufficient conditions
for controllability of the considered system. We propose an
approach for construction of controls  which transfer an arbitrary
initial point to the rest point in a certain finite time. Each
such control is constructed as a concatenation of a finite number
of positional controls (we call it a stepwise synthesis control).
On each step of our approach  we choose a new synthesis control.
Our approach essentially uses nonlinearity of a system with
respect to a control.  The obtained results are illustrated by
examples. In particular, the problem of the complete stoppage of a
two-link pendulum is solved. We also introduce  the class of
nonlinear systems which is called the class of staircase systems
that provides the applicability  of our approach.
\end{abstract}

\begin{keywords}
Nonlinear control system,
 mappability, controllability, stepwise synthesis, staircase systems
\end{keywords}

\begin{AMS}
93C10, 93B05, 93B11, 93B50, 93B52
\end{AMS}

\pagestyle{myheadings} \thispagestyle{plain}
\markboth{KOROBOV~V.~I., SKLYAR~K.~V. AND SKORYK~V.~O.}{{STEPWISE
SYNTHESIS OF CONSTRAINED CONTROLS}}

\section{Introduction} \label{skorik-skorik_section_1} Systems with  controls   appearing  linearly  are
most close to linear systems. Such  systems  are well studied and
various methods  are  developed, namely,  differential-geometric
methods, algebraic methods, and those  commonly used for  linear
systems. In particular, the important role is played by the
feedback linearization method.

In this paper we consider systems for which just  non-linearity
with respect to a control allows to solve the controllability
problem. Namely, we consider a class of systems which are
equivalent to systems of differential equations with one
dimensional control
\begin{equation}\label{skorik-intr_1}
y_i^{(n_i)}=H_i(y_1,\ldots, y_1^{(n_1-1)},\ldots, y_m,\ldots,
y_m^{(n_m-1)},u),\quad i=1,\ldots, m,\quad u\in{\Bbb R},
\end{equation} where $y_i^{(s)}$ means the derivative of order $s$
and the functions $H_1,$ $\ldots,$ $H_m$ are non-linear with
respect to $u.$

The basic idea of our approach  consists in the following. We
solve the problem of controllability to  a rest point of the
system (\ref{skorik-intr_1}) step by step. On the first step we
construct a positional control which depends  on  all state
variables, i.e. a control of the form
$$u=u_1\left(y_1,\ldots, y_1^{(n_1-1)},\ldots, y_m,\ldots,
y_m^{(n_m-1)}\right),$$  transferring  state coordinates of the
first equation to the rest point in certain finite time $T_1,$
i.e. $y_1(T_1)=\ldots=y_1^{(n_1-1)}(T_1)=0.$ On the second step we
construct a positional control
$$u=u_2\left(y_2,\ldots, y_2^{(n_2-1)},\ldots, y_m,\ldots,
y_m^{(n_m-1)}\right)$$ which transfers state variables of the
second equation to the rest point in certain finite time
$T_2-T_1,$ i.e. $y_2(T_2)=\ldots=y_2^{(n_2-1)}(T_2)=0,$  and keeps
coordinates  $y_1,$ $\ldots,$ $y_1^{(n_1-1)}$ at  the rest point,
i.e. $y_1(t)=\ldots=y_1^{(n_1-1)}(t)=0$ for $T_1\le t\le T_2.$
This can be done  only in the case when  $H_1$ depends on a
control non-linearly.

Analogously, on the  $i$-th  step we construct a positional
control
$$u=u_i\left(y_i,\ldots, y_i^{(n_i-1)},\ldots, y_m,\ldots,
y_m^{(n_m-1)}\right)$$ which transfers state variables of the
$i$-th equation to the rest point in certain finite time
$T_i-T_{i-1}$ and keeps coordinates  $y_1,$ $\ldots,$
$y_1^{(n_1-1)},$ $\ldots,$ $y_{i-1},$ $\ldots,$
$y_{i-1}^{(n_{i-1}-1)}$ at the rest point, i.e.
$y_1(t)=\ldots=y_1^{(n_1-1)}(t)=\ldots=y_{i-1}(t)=\ldots=y_{i-1}^{(n_{i-1}-1)}(t)=
0$ as $T_{i-1}\le t\le T_i,$ and so on. After $m$ such steps we
obtain the control of the form
$$\begin{array}{l}
u(y_1,\ldots, y_1^{(n_1-1)},\ldots, y_m,\ldots,
y_m^{(n_m-1)};t)=\\
=u_i\left(y_i,\ldots, y_i^{(n_i-1)},\ldots, y_m,\ldots,
y_m^{(n_m-1)}\right), \;\; t\in[T_{i-1},T_i], \; i=1,...,m\;\;
(T_0=0). \end{array} $$  This control transfers the initial point
to the rest point in the time $T=T_m.$

Thus, on each step we choose a new positional control solving the
positional synthesis problem. As a result of our approach, we
construct a programming control which is a concatenation of a
finite number of the positional controls. {\it We call   it a
''stepwise synthesis control''} which  transfers an arbitrary
initial point to the rest point in a certain finite time $T.$

Let us explain our construction by the following  example.
Consider the system
\begin{equation}\label{skorik-r1_pr0_p1}
 \dot y_1=\sin u,\quad  \dot y_2 = u\cos 2 u.  \end{equation}
This system has the form (\ref{skorik-intr_1}), where $n_1=n_2=1.$
Note that this system is not controllable with respect to the
first approximation.  Suppose $(y_{10}, y_{20})$ is an arbitrary
point. On the first step we choose  the control
$u_1(y_1,y_2)=-(\pi/2){\rm sign}\, y_1.$ This control transfers
the initial  point  to the point $y(T_1)=(y_1(T_1),
y_2(T_1))=(0,\pi y_{10}/2+y_{20})$ in the time $T_1=|y_{10}|.$ On
the second step we choose a control $u_2(y_2)$ such that
$y_1(t)=0$ for $t\ge T_1$ and $y_2(T_2)=0$ for a certain finite
$T_2\ge T_1.$ This means that the corresponding trajectory
$y(t)=(y_1(t),y_2(t))$  of the system (\ref{skorik-r1_pr0_p1})
belongs to the subspace $\{(y_1,y_2): y_1=0\}$ for $t\in
[T_1,T_2].$ This can be done by the control $u_2(y_2)=-\pi\, {\rm
sign}\, y_2.$ This control transfers the point  $y(T_1)$ to the
origin in the time $(T_2-T_1)=|y_{10}/2+y_{20}/\pi|.$ Therefore,
the  point $(y_{10}, y_{20})$ is transferred to the origin by the
control
$$u(y_1,y_2;t)=\left\{\begin{array} {cll}
-(\pi/2){\rm sign}\,y_1& \mbox{for}& 0\le t \le |y_{10}|,\\
-\pi\, {\rm sign}\, y_2 & \mbox{for}&  |y_{10}|<t\le
|y_{10}|+|y_{10}/2+y_{20}/\pi|,
\end{array}\right.$$
  along the trajectory of the system (\ref{skorik-r1_pr0_p1}) in
the time $T=|y_{10}|+|y_{10}/2+y_{20}/\pi|.$ Thus, we have a
stepwise synthesis,  i.e.  on the segment $[0,T_1]$ we choose the
position control $-(\pi/2){\rm sign}\,y_1,$ and on the segment
$[T_1, T_2]$ we choose another positional control $-\pi\, {\rm
sign}\, y_2.$ Note that the times $T_1$ and $T_2$ are not given in
advance but  depend on the initial point $(y_{10},y_{20}).$

In the paper  we introduce a new class of nonlinear single input
systems
\begin{equation}\label{skorik-r1_f2}
  \dot x =a(x)+\sum\limits_{i=1}^mb_i(x) \beta_i(x,u)\equiv a(x)+B(x) \beta(x,u),\quad x\in{\Bbb  R}^n, \quad
u\in {\Bbb  R},  \end{equation} where $ B(x)$ is a
$(n{\times}m)$-matrix $(2\le m\le n)$ with columns $b_1(x),$
$\ldots,$ $b_m(x),$ $\beta(x,u)$ is a $m$-dimensional
vector-function with components $\beta_1(x,u),$ $\ldots,$
$\beta_m(x,u),$ and $u$ is a one-dimen\-sional control. On the
first glance the system (\ref{skorik-r1_f2})  looks like an affine
control system of the form  \begin{equation}\dot
x=a(x)+B(x)u,\quad x\in{\Bbb R}^n,\; u\in {\Bbb
R}^m.\label{skorik-affine_system}\end{equation} However, let us
emphasize that in the system (\ref{skorik-r1_f2}) the control $u$
is only one-dimen\-sional  and, moreover, the nonlinearity of
$\beta(x,u)$ with respect to $u$ plays the crucial role in our
approach. Though an arbitrary single input nonlinear system $\dot
x=f(x,u)$   can be written in the form (\ref{skorik-r1_f2}) in
different ways, nevertheless, not every form is appropriate  for
the further analysis.

Within our approach we deal with systems of the form
(\ref{skorik-r1_f2}) which can be mapped to  systems of  the form
\begin{equation} \label{skorik-r1_kns}
\dot z=A_0z+B_0H(z,u),\quad z\in{\Bbb R}^n, u\in {\Bbb
R},\end{equation} where  $A_0={\rm diag}(A_{1},\ldots,A_{m})$ is a
constant $(n{\times}n)$-matrix, $B_0=(e_{s_1}, \ldots, e_{s_m})$
is a constant $(n{\times}m)$-matrix ($e_{s_i}$ is the $s_i$-th
unit vector of the space ${\Bbb R}^n,$ $i=1,\ldots, m$), and
$H(z,u)$ is a $m$-dimensional vector function.  The system
(\ref{skorik-r1_kns}) is equivalent to the system
(\ref{skorik-intr_1}). In
Section~\ref{skorik-skorik_mappability_of_nonlinear_systems} we
give conditions of the mappability of the system
(\ref{skorik-r1_f2}) on the system (\ref{skorik-r1_kns}). These
conditions are similar  to the linearizability conditions for
affine systems (\ref{skorik-affine_system}). Notice that changes
of variables can be used to increase the amount of rest points of
considered  systems (see Example~\ref{skorik-example_3}). This is
extremely important for our method.

The problem of linearizability for affine systems is well studied
and the conditions  are well  known
\cite{krener}--\cite{skorik-sklyar_sklyar_ignatovich}. However,
generally these conditions are not easy  for check. Therefore it
is important to find classes of systems  for which these
conditions are automatically satisfied. The first such class of
systems called ''the class of triangular systems'' was introduced
in the paper \cite{korobov}, where the feedback linearization was
given. In the paper \cite{korobov_pavlichkov} global properties of
the triangular systems in the singular case is considered. In the
present  paper we introduce the new class of nonlinear systems
called ''the class of staircase systems'' which are mapped on the
systems (\ref{skorik-r1_kns}) and give the corresponding changes
of variables (Section~\ref{skorik-skorik_section_7}).

In  Section~\ref{skorik-skorik_controllability_on_a_subspace} we
solve the problem of positional synthesis to a subspace for
certain class of nonlinear systems. Our main tool is  the
controllability function method  proposed in \cite{korobov1,
korobov_kniga} for solving  the synthesis problem of admissible
positional constrained control. Later it was developed for
different classes of systems and different statements of the
synthesis problem, for example, for infinite systems
\cite{korobov_sklyar_teor_fun}, for systems in a
finite-dimensional space with constraint on a control
\cite{korobov_sklyar} and its  derivatives \cite{korobov_skoryk}
which called inertial control in \cite[p.292]{pontryagin} and so
on. In
Subsection~\ref{skorik-skorik_application_of_the_controllability_function_method}
 we recall the application of the  controllability function method for linear
 systems  \cite{korobov_sklyar}. The main result is
given in Section~\ref{skorik-skorik_main_result}
(Theorem~\ref{skorik-skorik_theorem_2}). Namely, we give
conditions under
 which the application of the method of stepwise synthesis  gives the
solution of the controllability problem  from an arbitrary point
to the rest point of the system.

The obtained  results are  illustrated  by the  examples in
Section~\ref{skorik-skorik_section_examples}. In
Section~\ref{skorik-calming_of_vibrations_of_a_two-link_pendulum}
the problem of complete stoppage  of a two-link pendulum is
solved.

\section{Mappability of nonlinear systems on nonlinear systems of a special form}
\label{skorik-skorik_mappability_of_nonlinear_systems} We consider
the
 problem  of $0$-controllability for the system (\ref{skorik-r1_f2}).
 Suppose
 $ a(x),$ $b_1(x),$ $\ldots,$ $b_m(x)$ are a $n$ times continuously differentiable
vector functions,  $\beta_1(x,u),$ $\ldots,$ $\beta_m(x,u)$ are
 continuously differentiable scalar functions with respect to $x,$
$u,$  and
\begin{equation}\label{skorik-r1_f3} a(0) = 0, \quad  \beta(0,0) = 0. \end{equation}

In this section  we  give  sufficient conditions under which
 system (\ref{skorik-r1_f2}) is mapped on a system of the form
(\ref{skorik-r1_kns}).

 Below we use the following standard notations:  for a scalar continuously differentiable
function $ \varphi(x) = \varphi(x_1,\ldots,x_n) $, denote by
$L_a\varphi $ the derivative of the function $\varphi(x)$ along
the vector field $a(x)$, i.e. $ L_a\varphi(x) = \varphi_x(x)a(x),
$ where $ \varphi_x(x) =
\left(\varphi_{x_1}(x),\ldots,\varphi_{x_n}(x)\right). $ By
$[a(x),b(x)]$ denote the Lie bracket of the vector fields $a(x)$
and $b(x),$ i.e. $[a(x),b(x)]=b_x(x)a(x)-a_x(x)b(x),$ where
$a_x(x),$ $b_x(x)$ are matrices of the first derivatives of
vector-functions $a(x),$ $b(x).$ Also put $ {\rm ad}_a^0 b(x)
=b(x),$ ${\rm ad}_a^k b(x)= [a(x),{\rm ad}_a^{k-1} b(x)],$ $k\ge
1.$

Suppose  for  system (\ref{skorik-r1_f2})  the  condition
\begin{equation}\label{skorik-r1_rang}
{\rm rang}\,Q(x)=n\;\;\mbox{for all}\;\;x\in {\Bbb R}^n,
\end{equation}  holds, where $
Q(x)=(b_1(x), \ldots, b_m(x),\ldots,{\rm ad}_a^{n-1}
b_1(x),\ldots,{\rm ad}_a^{n-1} b_m(x)).$ By $q_i(x),$
$i=1,\ldots,nm,$  denote the columns of the matrix $Q(x).$
 Without loss of generality assume   ${\rm
rang}\,B(x)=m$ for all $x\in {\Bbb R}^n.$ Moreover, since we are
interested in the global $0$-controllability we require that the
vector fields $a(x),$ $b_1(x),$ $\ldots,$ $b_m(x)$ satisfy the
following regularity   property:   for all $j=1,\ldots, nm$
\begin{equation} \label{skorik-blqvblycb}  {\rm rang}(q_1(x),\ldots,
 q_j(x))=c_j \;\; \mbox{for all} \;\; x\in{\Bbb R}^n,\end{equation}
where $c_j$ are certain constants, $1\le c_j\le n.$

Now we delete all columns of the matrix $Q(x)$ that  linearly
depend on previous ones, i.e. columns such that $q_i(x)\in {\rm
Lin} \{q_1(x),\ldots,q_{i-1}(x)\}.$ It is convenient to examine
the columns $q_i(x),$ $i=1,\ldots,nm$,  one by one from left to
right and take into account  the following remark: if the column
$q_i(x)=\ad_a^kb_j(x)$ is deleted  then all columns of the form
$q_{i+ms}(x)=\ad_a^{k+s}b_j(x)$ for all $s\ge 1$ such that
$i+ms\le nm$ should be deleted  as well.  This algorithm is the
same as the algorithm for  linear controllable  systems with
 a multidimensional control  given in the paper  \cite{korobov1} and is  analogous
to the algorithm given in \cite{uonem, jakubchik_respodek,hunt}
for linearization of affine systems with multidimensional control.

 As a result, we obtain the matrix  consisting of the columns of $Q(x)$ which are  not deleted.
It is convenient to permutate these columns and deal with the
matrix $K(x)$ of the form
\begin{equation} \label{skorik-r1_m_K}
K(x){=}\left( b_1(x),\ldots,{\rm ad}_a^{n_1-1}b_1(x),\ldots,
b_m(x),
 \ldots, {\rm
ad}_a^{n_m-1}b_m(x)\right), \end{equation} where
$n_1+\ldots+n_m=n$   and ${\rm rang}\,K(x)=n$ for all  $x\in {\Bbb
R}^n.$

Our main assumption   is as follows: suppose there exist scalar
functions $\varphi_1(x),$ $\ldots,$ $\varphi_m(x)$, which are no
less than {\it twice continuously differentiable} such that:

 (a)  for each $i=1,\ldots,m$ the conditions
\begin{equation} \label{skorik-r1_f4} \left\{\begin{array}{l}
\left(\varphi_i(x)\right)_x{\rm ad}_a^k b_j(x) =0,\;\; k=
0,\ldots,\min\{n_i-2,n_j-1\},\;\;j=1,\ldots,m,\\
\left(\varphi_i(x)\right)_x{\rm ad}_a^{n_i-1} b_i(x) \ne 0, \;\;
x\in{\Bbb  R}^n,\\\varphi_i(0)=0,
\end{array}\right.\end{equation}
are satisfied;

(b) the   change of variables $z=L(x)\in C^{(2)}({\Bbb R}^n)$ of
the form
   \begin{equation} \label{skorik-r1_f5} z_{s_{i-1}+j} = L_a^{j-1}\varphi_i(x), \quad
   j= 1,\ldots,n_i,  \; i=1,\ldots,m,  \end{equation}
is non-singular, i.e. ${\rm det}\,L_x(x)\ne 0$ for all $x\in {\Bbb
R}^n,$ where  $s_0=0,$ $ s_i=n_1{+}\ldots{+}n_i,$ $i=1,\ldots,r.$

 Notice that the conditions of solvability of the system
(\ref{skorik-r1_f4}) in the class of once  continuously
differentiable functions are well known \cite{hartman}, however,
for our aim this is not sufficient.

 Then  in these
variables the system  (\ref{skorik-r1_f2}) takes the form
\begin{equation} \label{skorik-r1_fs}
 \dot z_{s_{i-1}+1}=z_{s_{i-1}+2},\;
\ldots,\;\dot z_{s_i-1}=z_{s_i},\;
 \dot z_{s_i}=H_i(z,u),
\quad i=1,\ldots,m,\end{equation} where
 \begin{equation} \label{skorik-r1_H_i}
 H_i(z,u)
=L_a^{n_i}\varphi_i\left(L^{-1}(z)\right)+\sum\limits_{k=1}^m\beta_k\left(L^{-1}(z),u\right)
L_{b_k}L_a^{n_i-1}\varphi_i\left(L^{-1}z\right).
\end{equation}  For $i=1,\ldots,m$ denote
$z^i=(z_{s_{i-1}+1},\ldots, z_{s_i})^*$ (the sign
* means the transposition). Then  the system  (\ref{skorik-r1_fs}) can be rewritten
as
\begin{equation} \label{skorik-r1_kns_i}
\dot z^i=A_iz^i+b_{0i}H_i(z,u),\quad z^i\in{\Bbb R}^{n_i},\quad
i=1,\ldots,m,
\end{equation} where
$$A_{i}=\left(\begin{array}{cccccc} 0&1&0&\ldots&0&0\\
\ldots&\ldots&\ldots&\ldots&\ldots&\ldots\\
 0&0&0&\ldots&0&1\\
  0&0&0&\ldots&0&0\\
  \end{array}\right),\quad
  b_{0i}=\left(\begin{array}{c}
  0\\
  \ldots\\0\\1\\\end{array}\right).$$
 The system (\ref{skorik-r1_kns_i}) obviously can be written
in the form (\ref{skorik-r1_kns}).

\section{Controllability to a subspace with respect to a part of variables}
\label{skorik-skorik_controllability_on_a_subspace} In this
section we construct a control which transfers any initial point
to a subspace. This is done by use of the controllability function
method \cite{korobov1, korobov_kniga}.

\subsection{Application of the controllability function method for
linear system}
\label{skorik-skorik_application_of_the_controllability_function_method}
The controllability function method gives a general approach for
solving the problem of synthesis of positional constrained
controls. We briefly recall the main ideas  of this method.
Consider the system $$ \dot x=A_0x+b_0v,\quad x\in{\Bbb
R}^{k_1},\;v\in{\Bbb R},$$ with the constraint on a control of the
form
$|v|\le d,$ where $d>0$ is a  given number and
$$
A_0=\left(\begin{array}{cccccc} 0&1&0&\ldots&0&0\\
\ldots&\ldots&\ldots&\ldots&\ldots&\ldots\\
 0&0&0&\ldots&0&1\\
  0&0&0&\ldots&0&0\\
  \end{array}\right),\quad
  b_{0}=\left(\begin{array}{c}
  0\\
  \ldots\\0\\1\\\end{array}\right).$$
Consider a nonsingular $(k_1{\times}k_1)$-matrix $ N(\Theta) =
\int\limits_0^\Theta\left(1-{\frac{t}{\Theta}}\right)
e^{-A_0t}b_0b_0^*e^{-A_0^*t}dt$ \cite{korobov_sklyar}. Suppose
that the number $a_0$ satisfies the condition
\begin{equation} \label{skorik-uslovie_na_a_0}
0<a_0\le 2 d^2/ (N^{-1}(1)b_0,b_0).\end{equation} Define the
controllability function $\Theta(x)$ at $x\ne 0$ as the unique
positive solution of the equation
\begin{equation} \label{skorik-skoryk-r1_f13}
2a_0\Theta=\left(N^{-1}(\Theta)x,x\right) \end{equation} and put
$\Theta(0)=0.$ Then the function $\Theta(x)$ is continuous and
continuously differentiable  for $x\ne 0.$  Choose a control
$v=v(x)$ in the form
\begin{equation} \label{skorik-skoryk-r1_f14}
v(x) = -\frac{1}{2}\,  b_0^*N^{-1}(\Theta (x))x, \quad  x \ne 0.
\end{equation} It can be shown that this control $v(x)$ satisfies
the  Lipschitz condition in each domain $K(\rho_1,\rho_2)=\{x:
0<\rho_1\le \|x\|\le \rho_2\}$ with a Lipschitz constant
$L_v(\rho_1,\rho_2)$ such that $L_v(\rho_1,\rho_2)\to+\infty$ as
$\rho_1\to 0.$

Put $y(\Theta,x)=D(\Theta)x,$ where $D(\Theta)= {\rm
diag}\,\left(\Theta^{-\frac{2k_1-2j+1}{2}}\right)_{j=1}^{k_1}.$
 Rewrite the control (\ref{skorik-skoryk-r1_f14})  in the form
$v(x) = a y(\Theta(x),x) \Theta^{-\frac{1}{2}}(x),$ where
$a=-\frac{1}{2}b_0^*N^{-1}(1).$ Let us show that  the control
satisfies the given constraint for any $x\in {\Bbb R}^{k_1}.$ To
this aim, for a fixed $\Theta$ let us consider the extremal
problem
$$a y(\Theta,x) \Theta^{-\frac{1}{2}}\to {\rm extr},\quad
(N^{-1}(1)y(\Theta,x),y(\Theta,x))-2a_0 \Theta=0.$$  Using the
Lagrange method we get $ y^0=\frac{1}{2\lambda}
\Theta^{-\frac{1}{2}}N(1)a^*$  for an extremum point $y^0.$
 Since $\frac{1}{2\lambda}=\sqrt{2a_0/(N(1)a^*,a^*)}\; \Theta$
then $ay^0
\Theta^{-\frac{1}{2}}=\pm\sqrt{a_0(N^{-1}(1)b_0,b_0)/2}.$ Hence,
the condition (\ref{skorik-uslovie_na_a_0}) implies that the
control (\ref{skorik-skoryk-r1_f14}) satisfies the constraint $
|v(x)| \le d$ for any $x\in {\Bbb R}^{k_1}.$

 Let us calculate the
derivative of the controllability function by virtue of the system
\begin{equation}  \label{skorik-skoryk-r1_zks}
\dot x=A_{0}x+b_0v(x),\quad x\in{\Bbb R}^{k_1}. \end{equation}
Substituting $\Theta=\Theta (x)$ to (\ref{skorik-skoryk-r1_f13})
and differentiating  we obtain
\begin{equation}\label{skorik-skoryk-r1_dottheta_1}
\begin{array}{l}
2a_{0}\dot\Theta=-\left(N^{-1}(\Theta)\widetilde
N(\Theta)N^{-1}(\Theta)x,x\right)\dot\Theta+\left((N^{-1}(\Theta)A_{0}+\right.\\[5pt]
\qquad\qquad\;\;\left.+A_{0}^*N^{-1}(\Theta)x,x\right)-
\left(N^{-1}(\Theta)b_0b_0^*N^{-1}(\Theta)x,x\right),\end{array}
\end{equation} where
$\widetilde N(\Theta)=\frac{1}{\Theta^2}\int\limits_0^\Theta t
e^{-A_{0}t}b_0b_0^*e^{-A_{0}^*t}dt.$ Since
$$A_{0}N(\Theta)+N(\Theta)A_{0}^*=-\int\limits_0^\Theta\left(1-\frac{t}{\Theta}\right)
d\left(e^{-A_{0}t}b_0b_0^*e^{-A_{0}^*t}\right)=b_0b_0^*-\widehat
N(\Theta),$$ where $\widehat
N(\Theta)=\frac{1}{\Theta}\int\limits_0^\Theta
e^{-A_{0}t}b_0b_0^*e^{-A_{0}^*t}dt,$ hence,
\begin{equation}\label{skorik-skoryk-r1_dottheta_2}
N^{-1}(\Theta)A_{0}+A_{0}^*N^{-1}(\Theta)=N^{-1}(\Theta)b_0b_0^*N^{-1}(\Theta)-
N^{-1}(\Theta)\widehat N(\Theta)N^{-1}(\Theta).
\end{equation} Then, denoting
$w=N^{-1}(\Theta)x,$  using  (\ref{skorik-skoryk-r1_f13}),
(\ref{skorik-skoryk-r1_dottheta_2}),
(\ref{skorik-skoryk-r1_dottheta_1}), and taking into account  the
form of the  matrices $\widehat N(\Theta),$ $\widetilde
N(\Theta),$ $N(\Theta)$ we get
$$\dot{\Theta}(x)_{\bigl|(\ref{skorik-skoryk-r1_zks})} =
-(\widehat N(\Theta)w,w)\Bigl/\left(\frac{1}{\Theta}(
N(\Theta)w,w)+(\widetilde N(\Theta)w,w)\right)=-1.$$  Thus, the
time of motion $T(x_0)$ from $x_0\in {\Bbb R}^{k_1}$ to  $x_T=0$
equals $\Theta(x_0),$ where $\Theta(x_0)$ is the positive solution
of the equation (\ref{skorik-skoryk-r1_f13}) at $x=x_0$.

\subsection{Controllability to a subspace} Solutions of all
considered systems  are understood in the  sense of
 differential inclusions \cite{filippov}.

At first, we consider the problem of controllability on a subspace
with respect to a part of variables  for  the system
\begin{equation} \label{skorik-skoryk-skhg}
\dot z=\left(\begin{array}{l}
\dot x\\
\dot y\\
\end{array}\right)=
 \left(\begin{array}{c}
A_0x+b_0 h(x,y,u)\\
g(x,y,u)\\
\end{array}\right),\quad z\in {\Bbb R}^k, x\in{\Bbb R}^{k_1}, y\in {\Bbb R}^{k_2},
u\in{\Bbb R}, \end{equation} where $h(z,u)=h(x,y,u)$ is a
continuous scalar function, $g(z,u)=g(x,y,u)$ is a continuous
$k_2$-dimensional vector function which  satisfy the  Lipschitz
condition with respect to $z$ and $u$ in  each domain $\{(z,u):
0<\rho_1\le \|z\|\le \rho_2, |u|\le \rho_3\}.$

Let us fix some number $d>0.$ Choose  $a_0$ satisfying
(\ref{skorik-uslovie_na_a_0}) and define $\Theta(x)$   as the
unique positive solution of the equation
(\ref{skorik-skoryk-r1_f13}) at $x\ne 0$ and put $\Theta (0)=0.$
Denote  $S^+=\{z\in{\Bbb R}^k: b_0^*N^{-1}(\Theta (x))x>0\},$ $
S^-=\{z\in{\Bbb R}^k: b_0^*N^{-1}(\Theta (x))x<0\},$ and $
S=\{z\in{\Bbb R}^k: b_0^*N^{-1}(\Theta (x))x=0\}.$

\begin{lemma}\label{skorik-skorik-lemma_1} Consider the system (\ref{skorik-skoryk-skhg}).
Suppose there exist two functions $u^+(z),$ $u^-(z)$ which satisfy
the Lipschitz condition in each set $K(\rho_1,\rho_2)=\{z:
0<\rho_1\le\|z\|\le \rho_2\}$   and satisfy the inequalities
\begin{equation} \label{skorik-skoryk-r1_usl_na_ui}
h(z,u^+(z))\ge d,\quad h(z,u^-(z))\le -d.\end{equation}

Then the control $u(z)=u(x,y)$ of the form
\begin{equation}  \label{skorik-skoryk-r1_uz}
u(z) = \left\{
\begin{array}{lll}
  u^-(z) & \mbox{if} \ & z\in S^+, \\
  u^+(z)& \mbox{if} \ &   z\in S^-, \\
 u^0(z) \in \left[u^-(z),u^+(z)\right] &  \mbox{if}  &
z\in S,
 \end{array}\right.\end{equation}
 transfers any point
$z_0=(x_0,y_0)$ to a point
 $z_{T}=(0,y_T)$
 along a trajectory of the system (\ref{skorik-skoryk-skhg}) in a certain finite
time $T(x_0)\le \Theta (x_0)$.
\end{lemma}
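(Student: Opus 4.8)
The plan is to treat the controllability function $\Theta(x)$ of Subsection~\ref{skorik-skorik_application_of_the_controllability_function_method} as a Lyapunov-type function for the $x$-subsystem and to prove that along every trajectory of (\ref{skorik-skoryk-skhg}) generated by the control (\ref{skorik-skoryk-r1_uz}) one has $\dot\Theta\le -1$. Since $\Theta$ is continuous, positive for $x\ne 0$, and vanishes exactly at $x=0$, such a decay forces $\Theta(x(t))$ to hit $0$ at some time $T\le\Theta(x_0)$, at which point $x(T)=0$ and hence $z_T=(0,y_T)$. Throughout I would write $\sigma=\sigma(z)=b_0^*N^{-1}(\Theta(x))x$, so that by definition $S^+=\{\sigma>0\}$, $S^-=\{\sigma<0\}$, $S=\{\sigma=0\}$, and I would abbreviate the running value of the nonlinearity as $h=h(z,u(z))$.

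First I would differentiate the defining relation (\ref{skorik-skoryk-r1_f13}) along $\dot x=A_0x+b_0h$, repeating verbatim the manipulation of the linear subsection but with the linear control value $v$ replaced by $h$. Using the identity (\ref{skorik-skoryk-r1_dottheta_2}) together with the elementary relation $\widehat{N}(\Theta)=\Theta^{-1}N(\Theta)+\widetilde{N}(\Theta)$ (immediate from the integral definitions of $\widehat{N},N,\widetilde{N}$), and writing $w=N^{-1}(\Theta)x$, the $\dot\Theta$-terms collect on the left with coefficient $(\widehat{N}(\Theta)w,w)$, giving the affine-in-$h$ formula
\[
\dot\Theta=-1+\frac{\sigma^2+2\sigma h}{(\widehat{N}(\Theta)w,w)}=-1+\frac{\sigma(\sigma+2h)}{(\widehat{N}(\Theta)w,w)},
\]
where $(\widehat{N}(\Theta)w,w)>0$ for $x\ne 0$ because $\widehat{N}(\Theta)$ is positive definite (controllability of $(A_0,b_0)$). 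Taking $h=v=-\sigma/2$ recovers $\dot\Theta=-1$, which checks the formula against the linear computation.

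The heart of the argument is the sign of $\sigma(\sigma+2h)$. The constraint-satisfaction analysis of the linear synthesis (the Lagrange computation in Subsection~\ref{skorik-skorik_application_of_the_controllability_function_method} showing $|v(x)|\le d$ for all $x$, under (\ref{skorik-uslovie_na_a_0})) yields $|\sigma|=2|v|\le 2d$ everywhere. On $S^+$ one has $\sigma>0$ and the control (\ref{skorik-skoryk-r1_uz}) selects $u^-$, so by (\ref{skorik-skoryk-r1_usl_na_ui}) $h\le -d$; hence $\sigma+2h\le 2d-2d=0$ and $\sigma(\sigma+2h)\le 0$. On $S^-$ one has $\sigma<0$ and $u^+$ gives $h\ge d$, so $\sigma+2h\ge -2d+2d=0$ and again $\sigma(\sigma+2h)\le 0$. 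On $S$ one has $\sigma=0$, so the bracketed term vanishes. In every case the displayed formula gives $\dot\Theta\le -1$.

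Finally I would assemble the conclusion. On the open sets $S^+,S^-$ the feedbacks $u^-,u^+$ are single-valued and Lipschitz, so classical solutions exist there; across $S$ the control is multivalued and solutions are read in the Filippov sense, but since $\dot\Theta$ is affine in $h$ and $\sigma=0$ on $S$, the estimate $\dot\Theta\le -1$ persists for convexified (sliding) motions as well. Thus $\Theta(x(t))$ is absolutely continuous with $\Theta(x(t))\le\Theta(x_0)-t$, so it reaches $0$ at some $T\le\Theta(x_0)$, giving $x(T)=0$ and $z_T=(0,y_T)$; the bound on $\Theta$ confines $x(t)$ to a bounded set and the Lipschitz hypothesis on $g$ keeps $y(t)$ finite on the compact interval $[0,T]$, so the solution is defined up to $T$. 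I expect the \emph{main obstacle} to be the behaviour near $x=0$, where the feedback and its Lipschitz constant blow up: one must verify that the monotone decay $\dot\Theta\le -1$ indeed makes the trajectory \emph{attain} $x=0$ in finite time rather than merely approach it, and that the Filippov solutions remain well-defined up to that instant.
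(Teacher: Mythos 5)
Your proposal is correct and follows essentially the same route as the paper: differentiating the controllability function $\Theta$ along the perturbed system, using the identity $\widehat N(\Theta)=\frac{1}{\Theta}N(\Theta)+\widetilde N(\Theta)$ and the bound $|v(x)|\le d$ to show the cross term $\sigma\bigl(\sigma+2h\bigr)=2\sigma\bigl(h-v\bigr)\cdot 1$ is nonpositive under the switching rule, hence $\dot\Theta\le -1$ and finite-time arrival at $x=0$ with $T\le\Theta(x_0)$. The only cosmetic difference is that the paper writes the closed loop as the linear synthesis plus the perturbation $b_0\bigl(h-v(x)\bigr)$ and delegates the existence/attainment issue near $x=0$ to the reference \cite{korobov_kniga}, whereas you carry out the same algebra with $v=-\sigma/2$ substituted explicitly and sketch the Filippov-solution argument yourself.
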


\begin{proof} Consider the first subsystem  of the system
(\ref{skorik-skoryk-skhg}) with $u=u(x,y).$ We have
\begin{equation} \label{skorik-skoryk-r1_zkns_i}
\dot
x=A_{0}x+b_0v(x)+b_0\Bigl(h\bigl(x,y,u(x,y)\bigr)-v(x)\Bigr),\quad
x\in{\Bbb R}^{k_1}.  \end{equation} Let us show  that  $
\dot{\Theta}(x)_{\bigl|(\ref{skorik-skoryk-r1_zkns_i})} \le -1.$
Substituting $\Theta=\Theta (x)$ to (\ref{skorik-skoryk-r1_f13})
and differentiating by virtue of the system
(\ref{skorik-skoryk-r1_zkns_i}) we obtain $$ \dot
\Theta(x)_{\bigl|(\ref{skorik-skoryk-r1_zkns_i})} = -1 +
\frac{2b_0^*N^{-1}(\Theta)x\Bigl(h\bigl(x,y,u(x,y)\bigr)-v(x)\Bigr)}
{\frac{1}{\Theta}(N^{-1}(\Theta)x,x) + (N^{-1}(\Theta)\widetilde
N(\Theta)N^{-1}(\Theta)x,x)}.$$  Since $|v(x)|\le d,$ the
inequalities  (\ref{skorik-skoryk-r1_usl_na_ui}) imply
inequalities $h(x,y,u^+(x,y))-v(x)\ge 0,$ $h(x,y,u^-(x,y))-v(x)\le
0, $ hence,
$$\frac{b_0^*N^{-1}(\Theta)x\Bigl(h\bigl(x,y,u(x,y)\bigl)-v(x)\Bigr)}
{\frac{1}{\Theta}\bigl(N^{-1}(\Theta)x,x\bigr)
+\bigl(N^{-1}(\Theta)\widetilde
N(\Theta)N^{-1}(\Theta)x,x\bigr)}\le 0,
 $$
what gives     $ \dot \Theta(x)_{|(\ref{skorik-skoryk-skhg})} \le
-1$ for all $z=(x,y)^*\in {\Bbb R}^k$ such that  $x\ne 0.$ This
means that the
 solution of
Cauchy's problem for the system (\ref{skorik-skoryk-skhg}) with
$u=u(z)$ of the form (\ref{skorik-skoryk-r1_uz})  exists on the
interval $[0,T(x_0))$ and finishes at the point $z_T=(0,y_T)$ in
the finite time $T(x_0)\le \Theta(x_0)$ \cite{korobov_kniga}.
\end{proof}

Further, we consider the problem of controllability to a subspace
with respect to a part of variables  for  the system
(\ref{skorik-r1_kns_i}). For any fixed $i\in \{1,\ldots,m\}$
consider the nonsingular $(n_i{\times}n_i)$-matrix $ N_i(\Theta) =
\int\limits_0^\Theta\left(1-{\frac{t}{\Theta}}\right)
e^{-A_{i}t}b_{0i}b_{0i}^*e^{-A_{i}^*t}dt$ and  choose a number
$a_{0i}$ such that $ 0<a_{0i}\le
2d_i^2/(N_i^{-1}(1)b_{0i},b_{0i})$ for a given $d_i>0.$ Introduce
the  controllability function
 $\Theta_i(z^i)$   as  the unique positive solution of the equation
$ 2a_0\Theta=\left(N_i^{-1}(\Theta)z^i,z^i\right)$ at  $z^i\ne
0^i$ and put $\Theta_i(0^i)=0.$ Lemma~\ref{skorik-skorik-lemma_1}
implies the following theorem.

 \begin{theorem}\label{skorik-skorik_theorem_1}
Consider  system
 (\ref{skorik-r1_kns_i}).
Suppose there exist two functions $u^+(z),$ $u^-(z)$ which satisfy
the Lipschitz condition in each set $K(\rho_1,\rho_2)=\{z:
0<\rho_1\le\|z\|\le \rho_2\}$  and  satisfy the inequalities
$$H_i(z,u^+(z))\ge d_i,\quad H(z,u^-(z))\le -d_i.$$

 Then the control  $u(z)$ of the form $$
u(z) = \left\{
\begin{array}{lll}
  u^-(z), & \mbox {if} \ & z\in S^+, \\
  u^+(z),& \mbox{if} \ &   z\in S^-, \\
 u^0(z) \in \left[u^-(z),u^+(z)\right], &  \mbox{if}  &
z\in S,
 \end{array}\right.$$
where $S^+{=}\{z\in{\Bbb R}^n:
b_{0i}^*N_i^{-1}(\Theta_i(z^i))z^i>0\},$ $ S^-{=}\{z\in{\Bbb R}^n:
b_{0i}^*N_i^{-1}(\Theta_i(z^i))z^i<0\},$
 $ S{=}\{z\in{\Bbb
R}^n: b_{0i}^*N_i^{-1}(\Theta_i(z^i))z^i=0\},$ transfers any point
$z_0=(z_0^1,$ $\ldots,$ $z_0^{i-1},z_0^i,z_0^{i+1},$ $\ldots,$
$z_0^m)^*$ to the point
 $z_{T}=(z^1_{T},$ $\ldots,$
$z^{i-1}_{T},0^i,z^{i+1}_{T},$ $\ldots,$ $z^m_{T})^*$ along a
trajectory of the system  (\ref{skorik-r1_kns_i}) in a certain
finite time $T(z_0^i)\le \Theta_i(z_0^i)$.
\end{theorem}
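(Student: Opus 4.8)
The plan is to deduce Theorem~\ref{skorik-skorik_theorem_1} directly from Lemma~\ref{skorik-skorik-lemma_1} by recognizing that the system (\ref{skorik-r1_kns_i}) is already in the block form to which the lemma applies, once we designate the $i$-th block as the ``$x$'' variables and collect the remaining blocks into the ``$y$'' variables. First I would fix the index $i$ and perform the relabeling
\begin{equation*}
x := z^i \in {\Bbb R}^{n_i}, \qquad y := (z^1,\ldots,z^{i-1},z^{i+1},\ldots,z^m)^* \in {\Bbb R}^{n-n_i},
\end{equation*}
so that $k_1 = n_i$, $k_2 = n - n_i$, and $k = n$. Under this identification the $i$-th subsystem $\dot z^i = A_i z^i + b_{0i} H_i(z,u)$ becomes exactly the first block $\dot x = A_0 x + b_0\, h(x,y,u)$ of (\ref{skorik-skoryk-skhg}), with $A_0 = A_i$, $b_0 = b_{0i}$, and $h(x,y,u) := H_i(z,u)$; the remaining $m-1$ subsystems, each of the form $\dot z^j = A_j z^j + b_{0j} H_j(z,u)$, together constitute the second block $\dot y = g(x,y,u)$.

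Next I would check that the hypotheses of the lemma are met under this relabeling. The matrices $N_i(\Theta)$, the constant $a_{0i}$ satisfying $0 < a_{0i} \le 2 d_i^2/(N_i^{-1}(1)b_{0i},b_{0i})$, and the controllability function $\Theta_i(z^i)$ introduced just before the theorem are precisely the quantities $N(\Theta)$, $a_0$, and $\Theta(x)$ of the lemma, now carrying the subscript $i$; in particular the sets $S^+$, $S^-$, $S$ defined in the theorem via $b_{0i}^* N_i^{-1}(\Theta_i(z^i)) z^i$ coincide with the sets $S^+$, $S^-$, $S$ of the lemma defined via $b_0^* N^{-1}(\Theta(x)) x$. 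The hypothesized functions $u^+(z)$, $u^-(z)$ are Lipschitz on each annulus $K(\rho_1,\rho_2)$ and satisfy $H_i(z,u^+(z)) \ge d_i$ and $H_i(z,u^-(z)) \le -d_i$, which is exactly the inequality (\ref{skorik-skoryk-r1_usl_na_ui}) for $h = H_i$ and $d = d_i$. The only regularity point worth a sentence is that $H_i(z,u)$ and the functions $H_j(z,u)$ are continuous and satisfy the required local Lipschitz condition in $z$ and $u$ on each set $\{0 < \rho_1 \le \|z\| \le \rho_2,\ |u| \le \rho_3\}$; this follows from the smoothness assumptions on $a$, $b_k$, $\beta_k$ and on the change of variables $L$ that produced (\ref{skorik-r1_H_i}).

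With the identification verified, the conclusion is immediate: Lemma~\ref{skorik-skorik-lemma_1} produces a control $u(z)$ of the stated piecewise form that transfers any initial point $z_0 = (x_0,y_0)$ to a point $z_T = (0,y_T)$ in finite time $T(x_0) \le \Theta(x_0)$. Translating back through the relabeling, $x_0 = z_0^i$, the terminal condition $x_T = 0$ reads $z_T^i = 0^i$, the untouched $y$-coordinates are the remaining blocks $z^1_T,\ldots,z^{i-1}_T,z^{i+1}_T,\ldots,z^m_T$, and the time bound becomes $T(z_0^i) \le \Theta_i(z_0^i)$, which is exactly the assertion of the theorem. I do not anticipate a genuine obstacle here, since the argument is a verification that the theorem is a special case of the lemma; the one place demanding care is confirming that the local Lipschitz regularity of $g = (H_j)_{j\ne i}$ and of $h = H_i$ holds so that the solution of the Filippov differential inclusion for (\ref{skorik-skoryk-skhg}) exists and the derivative estimate $\dot\Theta \le -1$ used in the lemma's proof remains valid.
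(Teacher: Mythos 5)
Your proposal is correct and is exactly the argument the paper intends: the paper gives no separate proof of Theorem~\ref{skorik-skorik_theorem_1}, stating only that ``Lemma~\ref{skorik-skorik-lemma_1} implies the following theorem,'' and your relabeling $x:=z^i$, $y:=(z^1,\ldots,z^{i-1},z^{i+1},\ldots,z^m)^*$ with $h=H_i$, $d=d_i$ is precisely that specialization. Your extra remark on verifying the local Lipschitz regularity of $h$ and $g$ is a sound (and in the paper implicit) detail, not a deviation.
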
\vskip2mm

\begin{corollary}\label{skorik-skorik-corollary_1}
Let us fix  $i\in\{1,\ldots,m\}.$  Suppose there exist two
functions $u_i^+(z^i,\ldots,z^m),$ $u_i^-(z^i,\ldots,z^m)$ which
satisfy the following conditions

(i)  $u_i^\pm(z^i,\ldots,z^m)$ are Lipschitz functions in each set
$$K_i(\rho_1,\rho_2)=\{z=(0^1,\ldots,0^{i-1},z^i,\ldots,z^m):
0<\rho_1\le\|z\|\le \rho_2\};$$

(ii)
$\;H_k(0^1,\ldots,0^{i-1},z^i,\ldots,z^m,u_i^\pm(z^i,\ldots,z^m))=0$
for all $k=1,\ldots,i-1,$

(iii)
$H_i(0^1,\ldots,0^{i-1},z^i,\ldots,z^m,u_i^-(z^i,\ldots,z^m))\le
-\varepsilon_i^-,$ \\
 $\hspace*{10mm}\;\;\;\, H_i(0^1,\ldots,0^{i-1},z^i,\ldots,z^m,u_i^+(z^i,\ldots,z^m))\ge
 \varepsilon_i^+$

\noindent  for some numbers $\varepsilon_i^\pm >0.$

Put $d_i=\min\{\varepsilon_i^{-},\varepsilon_i^{+}\}$ and
denote
$$\begin{array}{l}
S_i^+=\{z=(0^1,\ldots,0^{i-1},z^i,\ldots,z^m)^*\in{\Bbb R}^n:
b_{0i}^*N_i^{-1}(\Theta_i(z^i))z^i>0\},\\[10pt]
S_i^-=\{z=(0^1,\ldots,0^{i-1},z^i,\ldots,z^m)^*\in{\Bbb R}^n:
b_{0i}^*N_i^{-1}(\Theta_i(z^i))z^i<0\},\\
\end{array}$$
\begin{equation} \label{skorik-surface_S_i}
S_i=\{z=(0^1,\ldots,0^{i-1},z^i,\ldots,z^m)^*\in{\Bbb R}^n:
b_{0i}^*N_i^{-1}(\Theta_i(z^i))z^i=0\}.\end{equation}

Suppose also that

 (iv) the surface $S_i$ of the form
(\ref{skorik-surface_S_i})  is a
 switching surface of the control $u_i^\pm(z^i,$ $\ldots,$ $z^m)$ or there exists a
 control  $u_i^0(z^i,$ $\ldots,$ $z^m)$  such
that  the corresponding trajectory belongs to the surface $S_i$
and $$H_k(0^1,\ldots,0^{i-1},z^i,\ldots,z^m,
u_i^0(z^i,\ldots,z^m))=0\;\;\mbox{for all}\;\; k=1,\ldots,i-1.$$

Then the control
\begin{equation} \label{skorik-r1_uz}
u_i(z^i,\ldots,z^m) = \left\{
\begin{array}{lll}
  u_i^-(z^i,\ldots,z^m) & \mbox {if} \ & z\in S_i^+, \\
  u_i^+(z^i,\ldots,z^m)& \mbox{if} \ &   z\in S_i^-, \\
 u_i^0(z^i,\ldots,z^m) &  \mbox{if}  & z\in S_i,
 \end{array}\right. \end{equation}
 transfers any point
$z_0=(0^1,\ldots, 0^{i-1},z_0^i,z_0^{i+1},\ldots,z_0^m)^*$ to the
point
 $z_{T}=(0^1,$ $\ldots,$
$0^{i-1},0^i,z^{i+1}_{T},$ $\ldots,$ $z^m_{T})^*$ along a
trajectory of the system  (\ref{skorik-r1_kns_i}) in a certain
finite time $T(z_0^i)\le \Theta_i(z_0^i)$.
\end{corollary}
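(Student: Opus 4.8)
The plan is to deduce the statement from Theorem~\ref{skorik-skorik_theorem_1} (equivalently Lemma~\ref{skorik-skorik-lemma_1}) by restricting the motion to the manifold $M=\{z:\,z^1=0^1,\ldots,z^{i-1}=0^{i-1}\}$, on which the first $i-1$ subsystems are already at rest, and checking that $M$ is invariant and that on $M$ the $i$-th block fits the template of Lemma~\ref{skorik-skorik-lemma_1}.

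First I would show that $M$ is positively invariant for the closed-loop system (\ref{skorik-r1_kns_i}) driven by the control (\ref{skorik-r1_uz}). For $k<i$ the $k$-th block obeys $\dot z^k=A_kz^k+b_{0k}H_k$, so on $M$ (where $z^k=0^k$) its velocity equals $b_{0k}H_k$. By condition (ii) we have $H_k(0^1,\ldots,0^{i-1},z^i,\ldots,z^m,u_i^\pm)=0$, whence this velocity vanishes whenever the control takes the value $u_i^+$ or $u_i^-$. Off the surface $S_i$ the feedback (\ref{skorik-r1_uz}) equals one of these two values, so $\dot z^k=0^k$ there. On $S_i$ condition (iv) applies: either $S_i$ is a genuine switching surface, so the trajectory crosses it transversally and no sliding occurs, or the sliding control $u_i^0$ satisfies $H_k(\ldots,u_i^0)=0$ for $k<i$. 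In both cases the Filippov velocity of the $k$-th block is again zero, since $\dot z^k$ depends on $u$ only through the scalar $H_k$, so any Filippov convex combination of the velocities produced by $u_i^+$ and $u_i^-$ has vanishing $k$-th block as well. Hence $z^k(t)\equiv 0^k$ for all $k<i$, and the trajectory stays in $M$.

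Second, on $M$ the system (\ref{skorik-r1_kns_i}) reduces exactly to the template (\ref{skorik-skoryk-skhg}) of Lemma~\ref{skorik-skorik-lemma_1}: put $x=z^i\in{\Bbb R}^{n_i}$, $A_0=A_i$, $b_0=b_{0i}$, $y=(z^{i+1},\ldots,z^m)$, set $h(x,y,u)=H_i(0^1,\ldots,0^{i-1},z^i,\ldots,z^m,u)$, and let $g(x,y,u)$ collect the right-hand sides of the blocks $z^{i+1},\ldots,z^m$ restricted to $M$. The standing smoothness hypotheses guarantee that $h$ is continuous and $g$ is continuous and Lipschitz in the required domains. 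Condition (i) gives the Lipschitz property of $u_i^\pm$ demanded in Lemma~\ref{skorik-skorik-lemma_1}, and condition (iii) yields, with $d=d_i=\min\{\varepsilon_i^-,\varepsilon_i^+\}$, the inequalities $h(z,u_i^+)\ge\varepsilon_i^+\ge d_i$ and $h(z,u_i^-)\le-\varepsilon_i^-\le -d_i$, i.e. exactly (\ref{skorik-skoryk-r1_usl_na_ui}). Moreover the sets $S_i^+,S_i^-$ and $S_i$ of (\ref{skorik-surface_S_i}) are precisely the sets $S^+,S^-,S$ of Lemma~\ref{skorik-skorik-lemma_1} written for $N_i,A_i,b_{0i}$, since their defining relations involve $z^i$ alone.

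Applying Lemma~\ref{skorik-skorik-lemma_1} on $M$ then shows that the control (\ref{skorik-r1_uz}) drives $x=z^i$ from $z_0^i$ to $0^i$ along a trajectory of (\ref{skorik-r1_kns_i}) in a finite time $T(z_0^i)\le\Theta_i(z_0^i)$, while the blocks $z^{i+1},\ldots,z^m$ evolve to certain values $z_T^{i+1},\ldots,z_T^m$. Together with the invariance of $M$ established first, this yields the required transfer of $z_0=(0^1,\ldots,0^{i-1},z_0^i,z_0^{i+1},\ldots,z_0^m)^*$ to $z_T=(0^1,\ldots,0^{i-1},0^i,z_T^{i+1},\ldots,z_T^m)^*$. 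The delicate point throughout is the discontinuity of the feedback on $S_i$: the whole argument must be read in the Filippov sense, and the only genuine work is verifying that a possible sliding motion along $S_i$ still keeps the first $i-1$ blocks frozen, which is exactly what the linear dependence of $\dot z^k$ on $H_k$ together with conditions (ii) and (iv) provides.
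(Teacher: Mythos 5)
Your proposal is correct and follows essentially the same route as the paper: the paper likewise first establishes $z^k(t)\equiv 0^k$ for $k<i$ from conditions (ii) and (iv) via the Cauchy problem for the first $i-1$ blocks, and then invokes Theorem~\ref{skorik-skorik_theorem_1} (which is just Lemma~\ref{skorik-skorik-lemma_1} restated for system (\ref{skorik-r1_kns_i})) to transfer $z^i$ to $0^i$ in time $T(z_0^i)\le\Theta_i(z_0^i)$. Your write-up is in fact more explicit than the paper's two-line argument, particularly in justifying the invariance in the Filippov sense on the surface $S_i$, which the paper leaves implicit.
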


\begin{proof} We consider the Cauchy problem
 $$\dot z^k=A_kz^k+b_{0k}
H_k(z^k,\ldots,z^m,u_i(z^i,\ldots,z^m)),\quad
 z^k(0) = 0^k,\quad k=1,\ldots,i{-}1,$$
for the subsystem of the system (\ref{skorik-r1_kns_i})  with the
control of the form (\ref{skorik-r1_uz}). The conditions (ii),
(iv) give $z^1(t)\equiv 0^1,$ $\ldots,$ $z^{i-1}(t)\equiv 0^{i-1}$
for $t\ge 0.$   Applying Theorem~\ref{skorik-skorik_theorem_1} for
the point
 $z_0=(0^1,\ldots,0^{i-1},z_0^i,z_0^{i+1},\ldots,z_0^m)^*$ and for the control
$u(z)=u_i(z^i,\ldots, z^m)$ of the form (\ref{skorik-r1_uz}) we
get the statement of the corollary.~\end{proof}

\section{Main result}\label{skorik-skorik_main_result} In this section
we give sufficient conditions of $0$-controllability for  system
(\ref{skorik-r1_f2}) which is mapped  on the system
(\ref{skorik-r1_kns}) by the change of variables
(\ref{skorik-r1_f5}).

\begin{theorem}\label{skorik-skorik_theorem_2}  Consider the system
(\ref{skorik-r1_f2}) and suppose that the conditions
(\ref{skorik-r1_f3})
--- (\ref{skorik-r1_f5}) hold.  Consider the functions (\ref{skorik-r1_H_i}).
Suppose for each $i=1,\ldots,m$ there exist two functions
$u_i^{\pm }(z^i,\ldots,z^m)$
  which satisfy conditions  of
  Corollary~\ref{skorik-skorik-corollary_1}.

Then the system  (\ref{skorik-r1_f2})  is  $0$-controllable from
an arbitrary point $x_0$  in a certain finite time $T$.
\end{theorem}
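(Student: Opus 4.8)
The plan is to realize the stepwise synthesis sketched in the introduction: pass to the canonical block form of Section~\ref{skorik-skorik_mappability_of_nonlinear_systems}, and then drive the blocks to zero one at a time, using Corollary~\ref{skorik-skorik-corollary_1} as the single building block for each step.

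First I would invoke the mappability hypotheses. By conditions (\ref{skorik-r1_f3})--(\ref{skorik-r1_f5}), the change of variables $z=L(x)$ of the form (\ref{skorik-r1_f5}) is nonsingular on ${\Bbb R}^n$ and, with inverse $L^{-1}\in C^{(2)}$, transforms (\ref{skorik-r1_f2}) into the block system (\ref{skorik-r1_kns_i}) whose nonlinearities are exactly the functions $H_i$ of the form (\ref{skorik-r1_H_i}). Since $\varphi_i(0)=0$ and $a(0)=0$, every coordinate $L_a^{j-1}\varphi_i(0)$ vanishes, so $L(0)=0$; hence the rest point $x=0$ of (\ref{skorik-r1_f2}) corresponds to $z=0$, and it suffices to steer $z$ to the origin in finite time along trajectories of (\ref{skorik-r1_kns_i}).

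Next I would argue by induction over the blocks $i=1,\ldots,m$, carrying the invariant that at time $T_{i-1}$ the first $i-1$ blocks already vanish, $z^1(T_{i-1})=\ldots=z^{i-1}(T_{i-1})=0^{i-1}$ (with $T_0=0$); the base case $i=1$ is vacuous. At the inductive step the current state has the form $(0^1,\ldots,0^{i-1},z^i(T_{i-1}),\ldots,z^m(T_{i-1}))$, which is precisely the class of initial points treated in Corollary~\ref{skorik-skorik-corollary_1}. By hypothesis the functions $u_i^{\pm}(z^i,\ldots,z^m)$ satisfy conditions (i)--(iv) of that corollary, so the positional control $u_i$ of the form (\ref{skorik-r1_uz}) keeps $z^1,\ldots,z^{i-1}$ pinned at $0$ (conditions (ii),(iv)) while driving $z^i$ to $0^i$ in a finite time $T_i-T_{i-1}\le\Theta_i(z^i(T_{i-1}))$. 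This restores the invariant at level $i+1$. After $m$ steps we obtain $z(T_m)=0$, hence $x(T_m)=L^{-1}(0)=0$, and the concatenated control $u(z;t)=u_i(z^i,\ldots,z^m)$ for $t\in[T_{i-1},T_i]$ is a stepwise synthesis control carrying $x_0$ to the rest point in the finite time $T=T_m=\sum_{i=1}^m(T_i-T_{i-1})$.

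The step I expect to require the most care is the persistence of the invariant, namely that a single scalar control can genuinely hold the already-zeroed blocks at zero while steering the current one. This is exactly what conditions (ii) and (iv) of Corollary~\ref{skorik-skorik-corollary_1} encode, and it is the point where the nonlinearity of the $H_i$ in $u$ is indispensable: the same $u_i$ must simultaneously annihilate $H_1,\ldots,H_{i-1}$ on the subspace $z^1=\ldots=z^{i-1}=0^{i-1}$ and supply the sign-definite values of $H_i$ needed to apply Theorem~\ref{skorik-skorik_theorem_1}. I would also note that finiteness of $T$ is automatic, since on each interval $[T_{i-1},T_i]$ the solution (understood in the Filippov sense) exists and stays bounded, so each $z^i(T_{i-1})$ is finite and each $\Theta_i$ of a finite argument is finite.
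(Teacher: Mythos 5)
Your proposal is correct and takes essentially the same approach as the paper's own proof: pass to the canonical block form (\ref{skorik-r1_kns_i}) via the change of variables (\ref{skorik-r1_f5}), then apply Corollary~\ref{skorik-skorik-corollary_1} inductively block by block, concatenating the resulting positional controls into a stepwise synthesis control with $T=T_m\le\sum_{k=1}^m\Theta_k\bigl(z^k_{T_{k-1}}\bigr)$. The only differences are cosmetic: you explicitly justify $L(0)=0$ and the finiteness of each $\Theta_i$, points the paper leaves implicit.
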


\begin{proof} As it was shown in Section~\ref{skorik-skorik_mappability_of_nonlinear_systems},
it follows from the conditions (\ref{skorik-r1_f3}) ---
(\ref{skorik-r1_f4}) that the system (\ref{skorik-r1_f2}) is
mapped on the system (\ref{skorik-r1_kns_i}) and the map
(\ref{skorik-r1_f5}) takes  any initial point $x_0$ to the point
 $$ z_0 =
 \left(\varphi_1(x_0),\ldots,L_a^{n_1-1}\varphi_1(x_0),\ldots,
 \varphi_m(x_0),\ldots, L_a^{n_m-1}\varphi_m(x_0)\right).$$
Moreover,   the  point $x=0$ is mapped  to the point $z=0.$

Further we  perform  $m$ steps for $i=1,\ldots,m.$ For $i=1$ under
the suppositions of the theorem the control $ u_1(z^1,\ldots,z^m)$
of the form (\ref{skorik-r1_uz}) exists and satisfies the
conditions of Corollary~\ref{skorik-skorik-corollary_1}. Then
Corollary~\ref{skorik-skorik-corollary_1}  implies that the
control $u_1(z^1,\ldots, z^m)$ transfers the point $z_0$ to the
point $z_{T_1}=(0^1,z^2_{T_1},\ldots,z^m_{T_1})^*$ by virtue of
the system (\ref{skorik-r1_kns}) in some finite  time $T_1\le
\Theta_1(z_0^1).$ Suppose after  $(i-1)$ steps  ($2\le i\le m$) we
have constructed  the control
$$ u(z^k,\ldots,z^m;t) = u_k(z^k,\ldots,z^m) \quad\mbox{as}\quad T_{k-1}\le t<T_k \quad (T_0=0),\quad
k=1,\ldots,i-1,
$$ which  transfers the point  $z_0$ to the point
$z_{T_{i-1}}=(0^1,\ldots,0^{i-1},z^i_{T_{i-1}},$ $\ldots,$
$z^m_{T_{i-1}})^*$ by virtue of  the system (\ref{skorik-r1_kns})
in time $T_{i-1}\le
\sum\limits_{k=1}^{i-1}\Theta_k\bigl(z_{T_{k-1}}^k\bigr).$ Let us
 consider the $i$-th step. Under the suppositions of the theorem
the control  $ u_i(z^i,\ldots,z^m)$ of the form
(\ref{skorik-r1_uz}) exists and satisfies the conditions of
Corollary~\ref{skorik-skorik-corollary_1}.
 Then  Corollary~\ref{skorik-skorik-corollary_1}  implies that the
control $u(z^i,\ldots, z^m)$ transfers the point $z_{T_{i-1}}$ to
the point
$z_{T_i}=(0^1,\ldots,0^i,z^{i+1}_{T_i},\ldots,z^m_{T_i})^*$ by
virtue of the system (\ref{skorik-r1_kns}) in some finite time
$T_i-T_{i-1}\le \Theta_i(z_{T_{i-1}}^i).$

Thus,  after  $m$ steps we obtain that the control
$$ u(z;t) = u_i(z^i,\ldots,z^m) \quad\mbox{as}\quad T_{i-1}\le t<T_i\quad (T_0=0),
\quad i=1,\ldots,m,
$$ where  $u_i(z^i,\ldots,z^m)$ are of the form
(\ref{skorik-r1_uz}),  transfers
 the point $z_0$ to the point $z_T=0$  in some time $T=T_m\le
\sum\limits_{k=1}^m\Theta_k\bigl(z_{T_{k-1}}^k\bigr)$ along the
trajectory of the system
 (\ref{skorik-r1_kns}). This trajectory
 has the form $ z(t) = (z^1(t),\ldots,z^m(t))^*,$ where $z^i(t)=0^i$ as $
T_i\le t\le T$ for $i=1,\ldots,m-1.$

Returning to the system (\ref{skorik-r1_f2})  we find the
trajectory $x(t)$  connecting the point $x_0$ with the point
$x_T=0$ as a solution  of the nonlinear system
$$ \varphi_i(x_1,\ldots, x_n)=z_{s_{i-1}+1}(t),\;\ldots,\;
 L_a^{n_i-1}\varphi_i(x_1,\ldots,x_n)=z_{s_i}(t),\quad i=1,\ldots,m.
  $$
This trajectory also  can be found   by integrating  the system
(\ref{skorik-r1_f2}) with the control $u(t)=u(z(t))$ at  $0\le
t\le T$ with the initial condition  $ x(0)=x_0.$ \qquad\end{proof}

\section{Examples}\label{skorik-skorik_section_examples} In this section we give several examples
illustrating Theorem ~\ref{skorik-skorik_theorem_2}.

\subsection{\hskip-3.1mm}  Consider the  system
\begin{equation} \label{skorik-r1_pr2_sys2}
\dot x_1 = u^3+0.1\sin^2f_1(x_1,x_2,x_3,u), \quad \dot x_2 = u ,
\quad \dot x_3 = f_2(x_2),\quad |u|\le 2,
\end{equation} where
 $f_1$ and $f_2$ are
continuously differentiable functions such that  $ f_1(0,0,0,0) =
0,$ $f_2(0) = 0,$  $\left| f_2'(x_2)\right|\ge \delta>0. $ We note
that the system (\ref{skorik-r1_pr2_sys2}) is not controllable at
the first approximation in a neighborhood of the stationary point
$(x=0, u=0).$  We consider the $0$-controllability problem from
any point  $ x_0 =(x_{10},x_{20},x_{30})^* $ and construct a
control transferring the point $x_0$ to the origin.

The system (\ref{skorik-r1_pr2_sys2}) can be rewritten in the form
(\ref{skorik-r1_f2}) with  $a(x)=(0,0,f_2(x_2))^*,$ $a(0)=0,$
$b_1(x)=(1,0,0)^*,$ $b_2(x)=(0,1,0)^*,$
$\beta_1(x,u)=u^3+0.1\sin^2f_1(x_1,x_2,x_3,u),$ $\beta_2(x,u)=u,$
$m=2.$  The matrix $K(x)$ from  (\ref{skorik-r1_m_K})  has the
form
$$K(x)=(b_1(x), b_2(x),{\rm ad}_a b_2(x))=\left(\begin{array}{ccc}
1&0&0\\
0&1&0\\
0& 0& -f_2'(x_2)\end{array}\right),\quad {\rm rang}\, K(x)=3,\;
x\in{\Bbb R}^3,$$ hence, $n_1=1,$ $n_2=2.$    The conditions
(\ref{skorik-r1_f4}) require that functions $\varphi_1(x),$
$\varphi_2(x)$ satisfy the condition
$$\frac{\partial\varphi_1(x_1,x_2,x_3)}{\partial x_1}\ne 0;\;
\frac{\partial\varphi_2(x_1,x_2,x_3)}{\partial x_1}= 0,\;
\frac{\partial\varphi_2(x_1,x_2,x_3)}{\partial x_2}= 0,\; \frac{
\partial\varphi_2(x_1,x_2,x_3)}{\partial x_3} \ne 0.$$
We  choose  $\varphi_1(x_1,x_2,x_3)= x_1-x_2$ and
$\varphi_2(x_1,x_2,x_3)=x_3.$ Then the  non-singular change of
variables (\ref{skorik-r1_f5}) has the form
\begin{equation}  \label{skorik-r1_pr2_zam}
z_1 =x_1-x_2,\quad z_2 =x_3, \quad z_3 =f_2(x_2).
\end{equation} We get
$x_3=z_2,$ $ x_2=f_2^{-1}(z_3),$ $ x_1=z_1+f_2^{-1}(z_3),$ then
\begin{equation}  \label{skorik-r1_pr2_f2}
\dot z_1 = u^3 -u + 0.1\sin^2 \widetilde
{f_1}(z_1,z_2,z_3,u),\quad \dot z _2 = z_3 ,\quad
 \dot z_3 =\widetilde {f_2}(z_3)u,
\end{equation}
 where $\widetilde  {f_1}(z_1,z_2,z_3,u) =
f_1\Bigl(z_1+f_2^{-1}(z_3),f_2^{-1}(z_3),z_2,u\Bigr),$ $\widetilde
{f_2}(z_3)=f_2'(x_2)_{\bigl| x_2=f_2^{-1}(z_3)}.$ The system
(\ref{skorik-r1_pr2_f2}) has the form (\ref{skorik-r1_fs}), where
$$H_1(z,u)=u^3-u+0.1\sin^2\widetilde {f_1}(z_1,z_2,z_3,u),\quad
H_2(z,u)=\widetilde {f_2}(z_3)u.$$

Now consider  the $0$-controllability problem from the point $
z_0= (z_{10},z_{20},z_{30})^* = \left(x_{10}-x_{20},
x_{30},f_2(x_{20})\right)^*.$

 On the first step  of our approach we  find controls
$u_1^+$ and $u_1^-.$ Notice  that the equation
\begin{equation} \label{skorik-r1_pr2_f3} u^3-u+0.1\sin^2\widetilde {f_1}(z_1,z_2,z_3,u) = v
\end{equation}
 has three real
roots on the  segment $[-2,2]$ for all $v$ such that   $ |v|\le
\frac{2}{3\sqrt{3}} -\frac{1}{10}. $ Put $\varepsilon_1^\pm=0.2$
and choose  controls $u_1^+(z_1,z_2,z_3)$ and $u_1^-(z_1,z_2,z_3)$
as the solutions of the equation (\ref{skorik-r1_pr2_f3}) with
$v=0.2$ and $v=-0.2.$ It can be shown that $u_1^-\in [-1.2, -0.8]$
and $u_1^+\in [0.7, 1.1].$  Then we get
$H_1(z,u^+)=\varepsilon_1^+$ and $H_1(z,u^-)=-\varepsilon_1^-.$
Put
$$u_1(z_1,z_2,z_3)=\left\{\begin{array}{l}
u_1^+(z_1,z_2,z_3)\;\; if \;\;z_{1}<0,\\
u_1^-(z_1,z_2,z_3)\;\; if \;\;z_{1}>0.\\
\end{array}\right.$$
Then this control transfers any initial point $z_0=
(z_{10},z_{20},z_{30})^*$ to the point
$z_{T_1}=\left(0,z_{2T_1},z_{3T_1}\right)^*$ in the time
$T_1=5|z_{10}|.$ Therefore, the trajectory of the system
(\ref{skorik-r1_pr2_f2}) with the control $u=u_1(z_1,z_2,z_3)$
comes to the plane $z_1=0.$

On the second step we choose the control $u_2$ so that the
trajectory of the system (\ref{skorik-r1_pr2_f2}) with the control
$u=u_2$ belongs to the plane $z_1=0.$ This control should satisfy
the equation (\ref{skorik-r1_pr2_f3}) with $v=0$ and $z_1=0.$
This equation has three real  roots for any $z_2,$ $z_3.$
Moreover, it can be shown that two of them   belong to the
segments $[-1.1, -1]$ and  $[0.9,1]$ respectively.

Due to our assumption $|\widetilde f_2(z_3)|\ge \delta>0$ for
$z_3\in {\Bbb R}.$ In the case $\widetilde f_2(z_3)>0$ choose  $
u_2^-(z_2,z_3)\in [-1.1, -1]$ and  $ u_2^+(z_2,z_3)\in[0.9,1].$ In
the case $\widetilde f_2(z_3)<0$ choose  $ u_2^+(z_2,z_3)\in
[-1.1, -1]$ and  $ u_2^-(z_2,z_3)\in[0.9,1].$  Then
$H_1(0,z_2,z_3,u_2^\pm(z_2,z_3))=0.$ Let $\gamma^+$ and $\gamma^-$
be the trajectories of the system
$$
 \dot z_2 = z_3, \quad \dot z_3 =\widetilde {f_2}(z_3)u  $$
 going to the origin and
corresponding to the controls $u=u_2^+(z_2,z_3)$ and
$u=u_2^-(z_2,z_3)$ respectively. The curve $\gamma
=\gamma^+\cup\gamma^-$ breaks  the  plane $z_1=0$ in  two parts.
Put $$u_2(z_2,z_3)\!=\!\left\{\!\!\!\begin{array}{l}
u_2^+(z_2,z_3)\; \mbox{if the point} \; (z_2,z_3)\;\mbox{lies
below the curve}\;
\gamma \;\mbox{or belongs to}\;\gamma^+,\\
u_2^-(z_2,z_3)\; \mbox{if the point} \; (z_2,z_3)\;\mbox{lies
above  the curve}\;
\gamma \;\mbox{or belongs to}\;\gamma^-.\\
\end{array}\right.$$
This control transfers the point
$z_{T_1}=\left(0,z_{2T_1},z_{3T_1}\right)^*$ to the origin in
certain finite  time $(T_2-T_1)$ and the corresponding trajectory
belongs to the plane $z_1=0.$

Returning to the initial variables  we have that the control of
the form
 $$ u(x;t){=}\left\{\!\!\begin{array}{lcl}
   u_1\left(x_1-x_2,x_3,f_2(x_2)\right)& \mbox{as} & 0\le t<T_1, \\
   u_2 \left(x_3,f_2(x_2)\right)& \mbox{as}  & T_1\le t \le T_2, \\
    \end{array}\right. $$
transfers the initial point $ (x_{10},x_{20},x_{30})$ to the
origin by virtue of the initial system~(\ref{skorik-r1_pr2_sys2})
in some finite time $T_2,$ where $T_1$ is defined by
$x_{10}-x_{20}$ and $T_2-T_1$ is defined by $x_{2T_1}$ and
$x_{3T_1}.$

\subsection{\hskip-3.1mm} \label{skorik-example_3}  Consider  the system
\begin{equation} \label{skorik-r1_pr4_f1}
\dot x_1=u,\quad \dot x_2=u^3,
 \quad \ldots,\quad
 \dot x_n=u^{2n-1}, \end{equation}
  with constraints  on a control  of the form
 $u\in \Omega=\{ u: \;|u|\le 1\}.$ The system (\ref{skorik-r1_pr4_f1}) can be written as   $\dot x=\Phi(u),$
 $x\in {\Bbb R}^n$, $\Phi(u)=(u,u^3,\ldots,u^{2n-1})^*.$
 This system is globally $0$-controllable due to the geometrical
 criterion \cite{korobov_25} since the origin $x=0$ belongs to the
 interior of a convex span  of the set $\Phi(\Omega),$ i.e. $0\in {\rm int\; co}
\{\Phi(\Omega)\}.$

The system (\ref{skorik-r1_pr4_f1}) has the form
(\ref{skorik-r1_f2}) with $a(x)=0,$ $b_1(x)=e_n,$ $\ldots,$
$b_n(x)=e_1,$ where $e_i$ is the $i$-th unit vector of the space
${\Bbb R}^n.$       By $P_i(u)$ $(i=1,\ldots,n)$ denote the
polynomial of degree $(2n{-}2i{+}1)$ of the form
$$P_{i}(u) =
    u \prod\limits_{k = 1}^{n-i}\Bigl(u^2 - \frac{k^2}{n^2}\Bigr)=u^{2n-2i+1}+\sum\limits_{k=1}^{n-i}
    c_k^{(i)} u^{2k-1},\; i=1,\ldots,n-1,\quad P_n(u)=u.$$
Notice that all roots of  the polynomial $P_{i+1}(u)$ are the
roots of polynomials $P_1(u),$ $\ldots,$ $P_i(u).$ Put
$$\varphi_i(x_1,\ldots,x_n)=x_{n-i+1}+\sum\limits_{k=1}^{n-i}
    c_k^{(i)}x_k,\;i=1,\ldots,n-1,\;
    \;\varphi_n(x_1,\ldots,x_n)=x_1,$$
then the  conditions (\ref{skorik-r1_f4}) are satisfied. Hence,
the nonsingular  change of variables
$z_i=\varphi_i(x_1,\ldots,x_n),$ $i=1,\ldots,n,$ maps the system
(\ref{skorik-r1_pr4_f1}) to the system
\begin{equation} \label{skorik-r1_pr4_f2}
\dot z_i=P_i(u),\quad i=1,\ldots,n,\quad |u|\le 1, \end{equation}
and an arbitrary point $x_0=(x_{10},\ldots,x_{n0})^*$ is mapped to
a point $z_0=(z_{10},\ldots,z_{n0})^*,$ where $
z_{i0}=x_{n-i+1\,0}+\sum\limits_{k=1}^{n-i}
    c_k^{(i)} x_{k0}$ for $i=1,\ldots,n-1$ and $z_{n0}=x_{10}.$
We choose $u_i(z_i)=-\frac{n-i+1}{n}\;{\rm sign}\,z_i,$
$i=1,\ldots,n,$ and put
\begin{equation} \label{skorik-r1_pr4_upr}
u(z;t)=u_i(z_i)  \quad \mbox{as}\quad T_{i-1}\le t<T_i,\quad
i=1,\ldots,n, \end{equation}
 where $ T_0=0,$
$T_i=T_{i-1}+|z_{iT_{i-1}}/P_i(u_i)|$
($z_{iT_{i-1}}=z_{i}(T_{i-1})$). The control
(\ref{skorik-r1_pr4_upr}) transfers the point  $z_0$ to the origin
by virtue of the system (\ref{skorik-r1_pr4_f2}) in  some finite
time $T=T_n.$ Namely, on the first step the control $u_1=-{\rm
sign}\,z_{10}$ transfers the point $z_0$ to the point
$z_{T_1}=(0,z_{2T_1},\ldots, z_{nT_1})^*,$ where
$z_{kT_1}=P_k(u_1)T_1+z_{k0},$ $k=2,\ldots,n,$
 in the time $T_1=|z_{10}/P_1(u_1)|$ along the trajectory
$z(t)=\left(P_1(u_1)t+z_{10},\ldots, P_n(u_1)t+z_{n0}\right)^*.$
Since $P_1(u_i)=0$ for $i=2,\ldots,n$ then  $z_1(t)=0$ for $t\ge
T_1.$ Further, on the $i$-th step ($i=2,\ldots,n$) the control
 $u=u_i$ transfers the point
 $z_{T_{i-1}}=\left(0,\ldots,0,z_{iT_{i-1}},\ldots,z_{nT_{i-1}}\right)^*$
to the point $z_{T_i}=(0,\ldots,0,z_{i+1T_i},\ldots, z_{nT_i})^*,$
where $z_{kT_i}=P_k(u_i)(T_i-T_{i-1})+z_{kT_{i-1}},$
$k=i+1,\ldots,n,$ in the time
$T_i-T_{i-1}=|z_{iT_{i-1}}/P_i(u_i)|$ along the trajectory
$z(t)=\left(0,\ldots,0,P_i(u_i)(t-T_{i-1})+z_{iT_{i-1}},\ldots,
P_n(u_i)(t-T_{i-1})+z_{nT_{i-1}}\right)^*.$ Since  $P_i(u_k)=0$
for $k=i+1,\ldots,n$ then  $z_i(t)=0$ as  $t\ge T_i.$  Returning
to the initial variables we find $x_1,$ $\ldots,$ $x_n$
successively from the equalities $x_1=z_n,$ $
x_i=z_{n-i+1}-\sum\limits_{k=1}^{i-1}
    c_k^{(n-i+1)} x_k$ for $i=2,\ldots,n.$ Thus, the control $$u(x;t)=-\frac{n{-}i{+}1}{n}{\rm
sign}\left(x_{n{-}i{+}1}{+}\sum\limits_{k=1}^{n-i}
    c_k^{(i)} x_{k }\right)\quad\mbox{as}\quad T_{i-1}\le t<T_i,\quad
i=1,\ldots,n,$$
    satisfies the preassigned constraint   $|u|\le 1$ and
    transfers  an arbitrary point $x_0$ to the origin  in some finite time
 $T(x_0)$ along the trajectory  $x(t)=(x_1(t),\ldots,x_n(t))^*$ of the system
 (\ref{skorik-r1_pr4_f1}), where
 $x_1(t)=z_n(t),$ $x_i(t)=z_{n-i+1}(t)
 -\sum\limits_{k=1}^{i-1}
    c_k^{(n-i+1)} x_k(t),$ $i=2,\ldots,n.$

Notice  that this construction admits an obvious generalization.
Let us  choose numbers $\lambda_1,$ $\ldots,$ $\lambda_{n-1}$ such
that
 $\;0<\lambda_1<\ldots<\lambda_{n-1}< d$ and
introduce the  polynomials
$$P_i(u) =
    u \prod\limits_{k = 1}^{n-i}\left(u^2 - \lambda_k^2\right)=
    u^{2n-2i+1}+\sum\limits_{k=1}^{n-i}
    p_k^{(i)} u^{2k-1},\; i=1,\ldots,n-1,\quad P_n(u)=u.$$
Consider  the nonsingular change of variables
 $z_i=x_{n-i+1}+\sum\limits_{k=1}^{n-i}
  p_k^{(i)} x_k,$  $i=1,\ldots,n{-}1,$  $z_n=x_1$ and choose
  the control  (\ref{skorik-r1_pr4_upr}) with  $u_1(z)=-\alpha\, {\rm sign}\,z_1,$
$\alpha{\in}(0,d]\setminus\{\lambda_1,\ldots,\lambda_{n-1}\},$ and
 $ u_i(z_i,\ldots,z_n)=-\lambda_{n+1-i}\;{\rm sign}\,z_{i}$ for  $i=2,\ldots,n,$ where $
T_i=T_{i-1}+|z_{iT_{i-1}}/P_i(u_i)|,$  $i=1,\ldots,n.$ This
control satisfies the constraint $|u|\le d$ and  transfers an
arbitrary point $z_0$ to the origin in the finite time $T=T_n.$

\section{Calming of vibrations of a two-link pendulum}\label{skorik-calming_of_vibrations_of_a_two-link_pendulum} In this
section we consider the model  of a controllable two-link pendulum
(see fig.~\ref{skorik-ndm-ris1}).

\begin{multicols}{2}
  \begin{figure}[H]
   \vspace*{-3mm}
    \centering
    \includegraphics[scale=0.35]{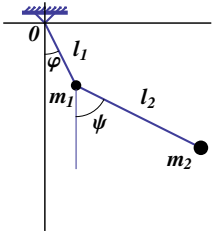}
     \vspace*{-3mm}
    \caption{ Two-link pendulum. }\label{skorik-ndm-ris1}
  \end{figure}
  \begin{figure}[H]
  \vspace*{-3mm}
    \centering
    \includegraphics[scale=0.35]{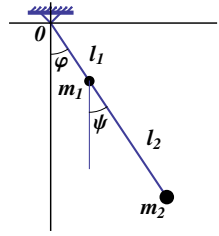}
    \vspace*{-3mm}
    \caption{Two-link pendulum in the time moment  $T_1.$}\label{skorik-ndm-ris2}
  \end{figure}
\end{multicols}

Namely, let  a pendulum have  two links of mass $m_1,$  $m_2$ and
of lengths $l_1,$ $l_2$ respectively. Then the state of the
pendulum is described by  angles $\varphi,$ $\psi$  and  angle
velocities $\dot \varphi,$ $\dot \psi$ ($\varphi$ is the angle
between  the first bar and the  vertical axis; $\psi$ is the angle
between the second bar and the vertical axis). Let $F_1,$ $F_2$ be
forces applied to the first and the second link respectively. Let
 $g$ be  the acceleration of the free fall. We consider the
model of the pendulum with $F_1=\alpha u^3,$ $F_2=u,$ where $u$ is
a control, $\alpha\in(0,(4/27)l_1^2/g^2].$ Suppose the initial
state of the  pendulum $(\varphi_0,\psi_0, \dot \varphi_0,\dot
\psi_0)$ is given.  We construct  a control $u=u(\varphi,\psi,
\dot \varphi,\dot \psi,t)$ which calms the vibrations of the
pendulum, that is   transfers the initial state
$(\varphi_0,\psi_0, \dot \varphi_0,\dot \psi_0)$ to the origin in
some finite time $T,$ i.e.  $\varphi(T)=0,$ $\psi(T)=0,$ $\dot
\varphi(T)=0,$ $\dot \psi(T)=0.$

The control motion of the two-link pendulum
 is described by the equations
\begin{equation} \label{skorik-syst_phi_psi}
\begin{array}{l}
\displaystyle
\ddot\varphi=-\frac{gm_1\sin\varphi+m_2\sin(\varphi{-}\psi)
\left(g\cos\psi+l_1\dot\varphi^2\cos(\varphi{-}\psi)+l_2\dot\psi^2\right)}
{l_1\left(m_1+m_2\sin^2(\varphi{-}\psi)\right)}+\alpha u^3,\\[5pt]
\displaystyle
\ddot\psi=\frac{\sin(\varphi{-}\psi)\left((m_1+m_2)\left(g\cos\varphi+l_1\dot\varphi^2\right)
+m_2l_2\dot\psi^2\cos(\varphi{-}\psi)\right)}
{l_2\left(m_1{+}m_2\sin^2(\varphi{-}\psi)\right)}+u.\end{array}\end{equation}

Put $x_1=\varphi,$ $x_2=\dot \varphi,$ $x_3=\psi,$ $x_4=\dot\psi,$
then we obtain the system
\begin{equation} \label{skorik-dnm_f1_x}
\dot x_1=x_2,\quad \dot x_2=\beta_1(x,u),\quad \dot x_3=x_4,\quad
 \dot x_4=\beta_2(x,u),  \end{equation}
where $$\begin{array}{l} \displaystyle \beta_1(x,u)=
-\frac{gm_1\sin x_1{+}m_2\sin(x_1{-}x_3)\left(g\cos
x_3{+}l_1x_2^2\cos(x_1{-}x_3){+}l_2x_4^2)\right)}{l_1\left(m_1{+}m_2\sin^2(x_1{-}x_3)\right)}+\alpha u^3,\\[3pt]
\displaystyle
\beta_2(x,u)=\frac{\sin(x_1{-}x_3)\left((m_1{+}m_2)(g\cos
x_1{+}l_1x_2^2){+}l_2m_2x_4^2\cos(x_1{-}x_3)\right)}{l_2\left(m_1+m_2\sin^2(x_1{-}x_3)\right)}+u,
\end{array}$$

 The system (\ref{skorik-dnm_f1_x}) can be rewritten in the form
(\ref{skorik-r1_f2}) with  $a(x)=(x_2,0,x_4,0)^*,$ $a(0)=0,$
$b_1(x)=(0,1,0,0)^*,$ $b_2(x)=(0,0,0,1)^*,$  $m=2.$  The matrix
$K(x)$ from  (\ref{skorik-r1_m_K}) has the form
$$K(x)=(b_1(x),{\rm ad}_a b_1(x), b_2(x),{\rm ad}_a b_2(x))=
\left(\begin{array}{rrrr}
0&-1&0&0\\
1&0&0&0\\
0& 0& 0& -1\\
0&0&1&0\end{array}\right)$$ and ${\rm rang}\, K(x)=4,$ $x\in{\Bbb
R}^4,$ hence, $n_1=2,$ $n_2=2.$    The conditions
(\ref{skorik-r1_f4}) imply
$$\begin{array}{l}
\displaystyle \frac{\partial\varphi_1(x_1,x_2,x_3)}{\partial x_2}=
0,\; \frac{\partial\varphi_1(x_1,x_2,x_3)}{\partial x_4}= 0,\;
\frac{\partial\varphi_1(x_1,x_2,x_3)}{\partial x_1}\ne 0,\\
\displaystyle  \frac{\partial\varphi_2(x_1,x_2,x_3)}{\partial
x_2}= 0,\; \frac{\partial\varphi_2(x_1,x_2,x_3)}{\partial x_4}=
0,\; \frac{
\partial\varphi_2(x_1,x_2,x_3)}{\partial x_3} \ne 0,
\end{array}\quad x\in{\Bbb R^4}.$$ Choose  $\varphi_1(x_1,x_2,x_3)=
x_1-x_2$ and $\varphi_2(x_1,x_2,x_3)=x_3.$ Hence, the non-singular
change of variables (\ref{skorik-r1_f5}) has the form
$z_1=x_1-x_3,$ $z_2=x_2-x_4,$ $z_3=x_3,$ $z_4=x_4.$ Then
(\ref{skorik-dnm_f1_x}) is mapped to   the system
\begin{equation}\dot z_1=z_2,\quad\dot z_2=H_1(z_1,z_2,z_3,z_4,u),\quad
\dot z_3=z_4,\quad \dot z_4=H_2(z_1,z_2,z_3,z_4,u),
 \label{skorik-dnm_f1_z}\end{equation}
$$\begin{array}{l}
\displaystyle  H_1(z,u)= -\frac{1}{l_1 l_2 (m_1{+} m_2\sin^2 z_1)}
(l_2 m_1 g \sin (z_1{+}z_3){+}l_2 m_2 \sin z_1 (g \cos z_3{+}l_2
z_4^2{+}\\[5pt]
   \qquad\qquad\quad +
     l_1 (z_2{+}z_4^2)\cos z_1){+}
   l_1\sin z_1 (l_2 m_2 z_4^2\cos z_1{+} (m_1 {+}
         m_2) (g \cos (z_1{+}z_3){+}\\[3pt]
\qquad\qquad\quad + l_1 (z_2{+}z_4)^2))){+}\alpha u^3{-}u,\\[5pt]
 \displaystyle H_2(z,u)=\frac{\sin z_1(l_2m_2z_4^2\cos z_1{+}(m_1{+}m_2)(g\cos(z_1+z_3)+l_1(z_2{+}z_4)^2))}
 {l_2(m_1{+}m_2\sin^2z_1)}+u.
\end{array}$$
Notice that for a fixed $z$ the function $H_1(z,u)$ is a cubic
polynomial  with respect to $u.$ We choose controls $u_1^+(z)$ and
$u_1^-(z)$ as solutions of equations $H_1(z,u)=\varepsilon_1^{+},$
$H_1(z,u)=-\varepsilon_1^{-}$ respectively (we do not give the
explicit  formulas  for these controls since they are too
complicated).

Put $w_1(z_1)=-\sqrt{2\varepsilon_1^{+}z_1}$ for $z_1\ge 0$ and
$w_1(z_1)=\sqrt{-2\varepsilon_1^{-}z_1}$ for $z_1< 0,$ and define
$$ u_1(z)=\left\{\begin{array}{l}
u_1^+(z_1,z_2,z_3,z_4)\;\; \mbox{if} \;\;z_2<w_1(z_1)\;\;\mbox{or}
\;\; z_2=w_1(z_1)\;\;
\mbox{and}\;\;z_1\ge 0,\\
u_1^-(z_1,z_2,z_3,z_4)\;\; \mbox{if}
\;\;z_2>w_1(z_1)\;\;\mbox{or}\;\;z_2=w_1(z_1)\;\;
\mbox{and}\;\;z_1\le 0.\\[3pt]
\end{array}\right. $$
If the initial point $z_0=(z_{10},z_{20},z_{30},z_{40})^*$
satisfies the inequality $z_{20}<w_1(z_{10})$  then the control
$u_1^+(z)$ transfers the point $z_0$ to the point
$z_{T_{11}}=\bigl(z_{1T_{11}},z_{2T_{11}},$
$z_{3T_{11}},z_{4T_{11}}\bigr)^*$ in the time
$T_{11}=\bigl(-z_{20}{+}\sqrt{(z_{20}^2{-}2z_{10}\varepsilon_1^{+})
\varepsilon_1^{-}/(\varepsilon_1^{+}{+}\varepsilon_1^{-})}\bigr)\bigl/\varepsilon_1^{+}$
and further the control $u_1^-(z)$ transfers the point
$z_{T_{11}}$ to the point
$z_{T_1}=\left(0,0,z_{3T_1},z_{4T_1}\right)^*$ in the time
$T_{12}=\sqrt{\frac{z_{20}^2-2z_{10}\varepsilon_1^{+}}{\varepsilon_1^{-}(\varepsilon_1^{+}+
\varepsilon_1^{-})}}.$ Thus, the control $u_1(z)$ transfers the
initial point  $z_0$ to the point $z_{T_1}$ in the time
$T_1=T_{11}+T_{12}=\bigl(-z_{20}{+}\sqrt{(z_{20}^2-2z_{10}\varepsilon_1^{+})
(\varepsilon_1^{+}+\varepsilon_1^{-})/\varepsilon_1^{-}}\bigr)\bigl/\varepsilon_1^{+}.$

If the initial point $z_0$ satisfies the conditions
$z_{20}=w_1(z_{10})$ and $z_{10}\ge 0$ then the control $u_1^-(z)$
transfers the point  $z_0$ to the point
$z_{T_1}=\left(0,0,z_{3T_1},z_{4T_1}\right)^*$  in the time
$T_1=T_{12}.$

Analogously, in the case $z_{20}>w_1(z_{10})$ the control
$u_1^-(z)$ transfers $z_0$ to the point $z_{T_{11}}$ in the time
$T_{11}=\bigl(z_{20}+\sqrt{(z_{20}^2+2z_{10}\varepsilon_1^{-})
\varepsilon_1^{+}/(\varepsilon_1^{+}+\varepsilon_1^{-})}\bigr)\bigl/\varepsilon_1^{-}$
 and further the control
$u_1^+(z)$ transfers the point $z_{T_{11}}$ to the point
$z_{T_1}=\left(0,0,z_{3T_1},z_{4T_1}\right)^*$ in the time
$T_{12}=\sqrt{\frac{z_{20}^2+2z_{10}\varepsilon_1^{-}}{\varepsilon_1^{+}(\varepsilon_1^{+}+
\varepsilon_1^{-})}}.$ Thus, the control $u_1(z)$ transfers the
point  $z_0$ to the point $z_{T_1}$ in the time
$T_1=T_{11}+T_{12}=\bigl(z_{20}+\sqrt{(z_{20}^2+2z_{10}\varepsilon_1^{-})
(\varepsilon_1^{+}+\varepsilon_1^{-})/\varepsilon_1^{+}}\bigr)
\bigl/\varepsilon_1^{-}.$

Finally, if $z_{20}=w_1(z_{10})$ and $z_{10}\le 0$ then the
control $u_1^+(z)$ transfers the point $z_0$ to the point
$z_{T_1}=\left(0,0,z_{3T_1},z_{4T_1}\right)^*$ in the time
$T_1=T_{12}.$

Thus, on the first step  the control $u_1(z)$ transfers the point
$z_0$ to the point $z_{T_1}=\left(0,0,z_{3T_1},z_{4T_1}\right)^*$
along the trajectory of the system (\ref{skorik-dnm_f1_z}) in the
time $T_1.$

On the second step  the motion continues  in the plane
$P=\{(z_1,z_2,z_3,z_4): z_1=z_2=0\}.$ To ensure this,  we choose a
control as a root of the equation
\begin{equation} \label{skorik-equation_H_1-0}
H_1(0,0,z_3,z_4,u)=\alpha u^3-u-\frac{g}{l_1}\,\sin
z_3=0.\end{equation} Notice that if $\alpha \in
(0,(4/27)l_1^2/g^2] $ then this equation has at least one positive
root and one negative root for any $z_3\in{\Bbb R}.$ Put
$$\alpha_0(z_3){=}\sqrt[3]{\frac{g\sin z_3}{2\alpha l_1}{+}\frac{1}{\alpha}\sqrt{\frac{g^2\sin^2z_3}{4l_1^2}{-}\frac{1}{27\alpha}}},
\quad \beta_0(z_3){=}\sqrt[3]{\frac{g\sin z_3}{2\alpha
l_1}{-}\frac{1}{\alpha}\sqrt{\frac{g^2\sin^2z_3}{4l_1^2}{-}\frac{1}{27\alpha}}}
.
$$
Choose $u_2^+(z_3)$ as the maximal root  and  $u_2^-(z_3)$ as the
minimal  root of the equation (\ref{skorik-equation_H_1-0}), i.e.
$$u_2^+(z_3)=\alpha_0(z_3)+\beta_0(z_3),\quad
u_2^-(z_3)=\frac{1}{2}\left(-1+i
\sqrt{3}\right)\alpha_0(z_3)-\frac{1}{2}\left(1+i
\sqrt{3}\right)\beta_0(z_3).$$  Then for all $z_3\in{\Bbb R}$ we
have
$$0<u_2^+(-\pi/2)\le u_2^+(z_3)\le u_2^+(\pi/2),\quad
u_2^-(-\pi/2)\le u_2^-(z_3)\le u_2^-(\pi/2)<0.$$  Since
$H_2(0,0,z_3,z_4,u)=u$ then
$$H_2(0,0,z_3,z_4,u_2^+(z_3))\ge\varepsilon_2^{+},\quad
H_2(0,0,z_3,z_4,u_2^-(z_3))\le -\varepsilon_2^{-},$$ where
$\varepsilon_2^{+}=u_2^+(-\pi/2),$
$\varepsilon_2^{-}=-u_2^+(\pi/2).$

Consider  the trajectories of the system (\ref{skorik-dnm_f1_z})
with the controls $u_2^\pm(z_3)$ which go to the origin, i.e. the
trajectories of the system
 $$\dot z_1=z_2,\quad\dot z_2=0,\quad \dot z_3=z_4,\quad \dot
z_4=u_2^\pm (z_3),
$$ which go to the origin. They belong to the plane $P$ and, in addition,  $z_4=w_2(z_3),$ where
$w_2(z_3)=-\sqrt{2\int\limits_0^{z_3}u^+(\zeta)d\zeta}$ if $z_3\ge
0$ and $w_2(z_3)=\sqrt{-2\int\limits_{z_3}^0u^-(\zeta)d\zeta}$ if
$z_3\le 0.$
 Define
$$ u_2(z_3,z_4)=\left\{\begin{array}{l}
u_2^+(z_3)\;\; \mbox{if} \;\;z_4<w_2(z_3)\;\;\mbox{or} \;\;
z_4=w_2(z_3)\;\;
\mbox{and}\;\;z_3\ge 0,\\
u_2^-(z_3)\;\; \mbox{if}
\;\;z_4>w_2(z_3)\;\;\mbox{or}\;\;z_4=w_2(z_3)\;\;
\mbox{and}\;\;z_3\le 0.\\[3pt]
\end{array}\right. $$
Like the first step, this control transfers the point $z_{T_1}$ to
the origin in some finite time $T_2-T_1.$

 Therefore, returning to the initial variables we have that  the  control
$$u(\varphi,\dot\varphi,\psi,\dot\psi;t)=\left\{\begin{array}{ll}
u_1(\varphi{-}\psi,\dot\varphi{-}\dot\psi,\psi,\dot\psi),& 0\le
t\le
T_1,\\
u_2(\psi,\dot\psi),& T_1\le t\le T_2,\\
\end{array}\right.$$
transfers the initial point
$(\varphi(0),\dot\varphi(0),\psi(0),\dot\psi(0))$ to the origin
along the trajectory of the system (\ref{skorik-syst_phi_psi}) in
the finite time $T=T_2.$

Let us summarize the results.  We have proved  that the stoppage
problem of a controllable two-link pendulum can be solved in the
following way. On the first step the  control is chosen so that
the angles $\varphi,$ $\psi$  and the angular speeds
$\dot\varphi,$ $\dot\psi$ become equal in the finite time moment
$T_1,$  i.e. $\varphi(T_1)=\psi_(T_1)$ and
$\dot\varphi(T_1)=\dot\psi(T_1)$ (see fig.~\ref{skorik-ndm-ris2}).
Roughly speaking, the two links of the pendulum form a one-link
pendulum of length $l=l_1+l_2.$ Further    damping of vibrations
of the two-link pendulum  preserves this configuration of the
links, i.e. we choose the control so that  $\varphi(t)=\psi(t)$
and $\dot\varphi(t)= \dot\psi(t)$ for $t\in [T_1,T_2]$ until  the
time moment $T_2$ when the stoppage occurs.

\begin{multicols}{2}
  \begin{figure}[H]
    \centering
    \includegraphics[scale=0.4]{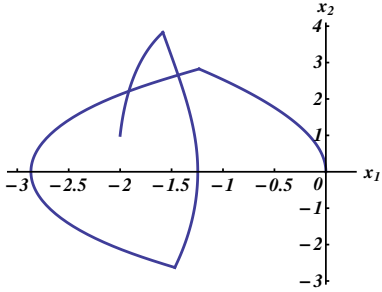}
     \vspace*{-3mm}
    \caption{Projection of the phase trajectory of the system
(\ref{skorik-dnm_f1_x}) on the plane $Ox_1x_2.$}
\label{skorik-ndm-ft1}

  \end{figure}
  \begin{figure}[H]
    \centering
    \includegraphics[scale=0.4]{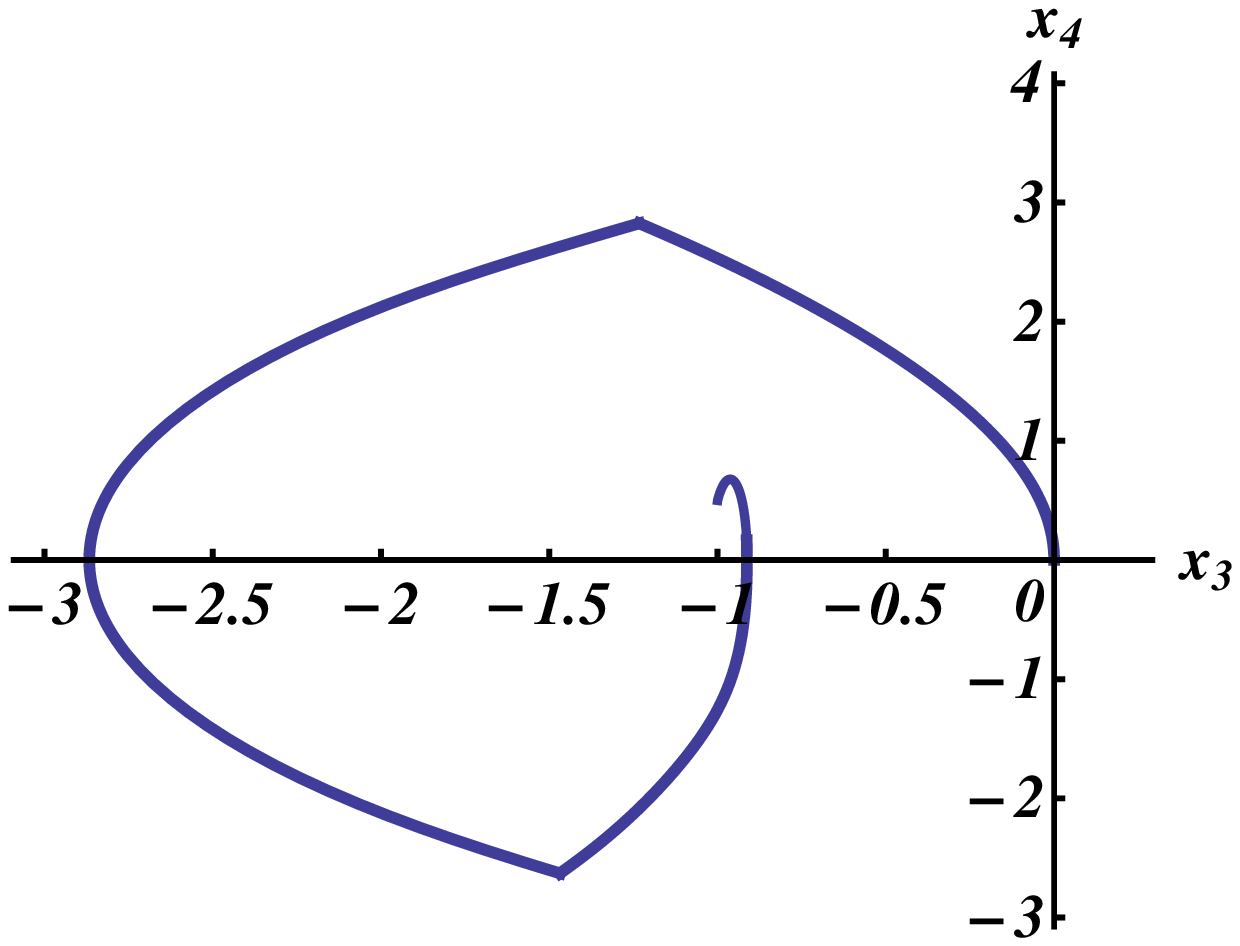}
    \vspace*{-3mm}
    \caption{Projection of the phase trajectory of the system
(\ref{skorik-dnm_f1_x}) on the plane $Ox_3x_4.$}
\label{skorik-ndm-ft2}
  \end{figure}
\end{multicols}

\begin{multicols}{2}
  \begin{figure}[H]
  \vspace*{-5mm}
    \centering
    \includegraphics[scale=0.4]{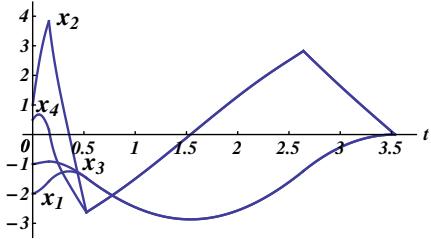}
     \vspace*{-5mm}
        \caption{Graphics of components of the trajectory $x(t).$} \label{skorik-ndm-xt}
  \end{figure}
  \begin{figure}[H]
   \vspace*{-5mm} \centering
    \includegraphics[scale=0.4]{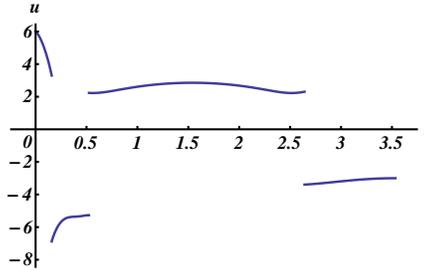}
    \vspace*{-5mm}
    \caption{Graphic of control on the trajectory.} \label{skorik-ndm-ut}
  \end{figure}
\end{multicols}

As an example, let us  transfer  the point $x_0=(-2,1,-1,0.5)^*$
to the origin  according to  the algorithm described earlier for
the system (\ref{skorik-dnm_f1_x}) with  $m_1=m_2,$ $l_1=l_2=l,$
$g/l=1,$ $\alpha=1/9$   (see
fig.~\ref{skorik-ndm-ft1}--fig.~\ref{skorik-ndm-ut}).
 In this case $\varepsilon_1^{+}=20,$
$\varepsilon_1^{-}=10$ and $T_{11}\approx 0.15814,$ $T_1\approx
0.52443,$ $T_{21}\approx 2.64102,$ $T=T_2\approx 3.53471.$

\section{Classes  of staircase systems}\label{skorik-skorik_section_7}

In this section we introduce the new classes of nonlinear systems
 which are mapped on the systems (\ref{skorik-r1_kns}). In addition,
  we  give    changes of variables    satisfying
(\ref{skorik-r1_f4})--(\ref{skorik-r1_f5}).

\subsection{\hskip-3.1mm}  Let the system  (\ref{skorik-r1_f2}) be of the form
\begin{equation} \label{skorik-r1_f20_obobshennaya_treugolnaya}
\left\{\begin{array}{l}
  \dot x_1 \qquad\;\,=  f_1(x_1,\ldots,x_{m{+}1}),\\
  \dot x_2 \qquad\;\, =   f_2(x_1,\ldots,x_{m{+}2}),\\
.\qquad.\qquad.\qquad.\qquad.\qquad.\\
 \dot x_{n{-}m}\quad = f_{n{-}m}(x_1,\ldots,x_n), \\
 \dot x_{n{-}m{+}1} = f_{n{-}m{+}1}(x_1,\ldots,x_n,u),\\
 .\qquad.\qquad.\qquad.\qquad.\qquad.\\
 \dot x_n\qquad\;\, = f_n(x_1,\ldots,x_n,u),
   \end{array}\right. 1\le m<n.  \end{equation}

 The system (\ref{skorik-r1_f20_obobshennaya_treugolnaya}) for $m=1$  was
introduced and considered in the paper \cite{korobov} and   was
named  the triangular system.

In this subsection we consider  the case $m=2$ in detail, i.e. we
consider the system
\begin{equation}  \label{skorik-r1_f20}
\left\{\begin{array}{l}
  \dot x_1 \quad =  f_1(x_1,x_2,x_3),  \\
  \dot x_2 \quad =   f_2(x_1,x_2,x_3,x_4),  \\
.\qquad.\qquad.\qquad.\qquad.\qquad.\\
 \dot x_{n-2}  = f_{n-2}(x_1,x_2,\ldots,x_n), \\
 \dot x_{n-1} = f_{n-1}(x_1,x_2,\ldots,x_n,u),   \\
 \dot x_n \quad =   f_n(x_1,x_2,\ldots,x_n,u), \qquad\qquad n \ge 3.     \\
   \end{array}\right.  \end{equation}
Here and  further $a(x)=(f_1,\ldots,f_{n-2}),$ $b_1 =(0,\ldots,0,
1, 0)^*,$ $ b_2 = (0,\ldots,0, 1)^*,$ $ \beta_1(x,u) =
f_{n-1}(x_1,\ldots,x_n,u),$ $ \beta_2(x,u)=f_n(x_1,\ldots,x_n,u).$

 Put $\varphi_1(x) = x_1,$ $\varphi_2(x) =
 x_2 $ and consider the change of variables
 \begin{equation} \label{skorik-r1_f22}
 \left\{\begin{array}{l}
 y_1= \varphi_1(x)=x_{1} \equiv F_{1}(x_{1}),\quad z_{1}=\varphi_2(x)=x_{2}\equiv \Phi_{1}(x_{1},x_{2}),\\
\displaystyle  y_k= L_a^{k-1}\varphi_1(x) =
 \sum\limits_{i=1}^{2k-3}\frac {\partial
F_{k-1}(x_{1},\ldots,x_{2k-3})}{\partial x_{i}}\ f_{i}(x_{1},\ldots,x_{i+2})\equiv \\
 \quad\;\equiv F_{k}(x_{1},\ldots,x_{2k-1}), \quad 2\le k \le p\;\; \mbox{for}\;\; n=2p
 \;\;
\mbox{or}\;\; n=2p-1,\\
\displaystyle
z_k=L_{a}^{k-1}\varphi_2(x)=\sum\limits_{i=1}^{2k-2}\frac
{\partial
\Phi_{k-1}(x_{1},\ldots,x_{2k-2})}{\partial x_{i}}f_{i}(x_{1},\ldots,x_{i+2})\equiv\\
\quad\; \equiv \Phi_{k}(x_{1},\ldots,x_{2k}),
\quad\left\{\begin{array}{lcl}
 2\le k \le p & \mbox {if} & n= 2p,\\
 2\le k \le p-1&\mbox{if} & n=2p-1.\end{array}\right.
\end{array} \right.\end{equation}
In addition, if  $ n=2p $ then put
 $$\begin{array}{l}
F_{p+1}(x,u)=\sum\limits_{i=1}^{n-2}\frac {\partial
F_{p}(x_{1},\ldots,x_{n-1})}{\partial
x_{i}}f_{i}(x_{1},\ldots,x_{i+2})  + \frac {\partial
F_p(x_1,\ldots,x_{n-1})}{\partial
x_{n-1}}f_{n-1}(x,u),\\[5pt]
 \Phi_{p+1}(x,u)=\sum\limits_{i=1}^{n-2}\frac
{\partial \Phi_{p}(x)}{\partial
x_{i}}f_{i}(x_{1},\ldots,x_{i+2}){+} \frac {\partial
\Phi_{p}(x)}{\partial x_{n-1}}f_{n-1}(x,u){+} \frac{\partial
\Phi_{p}(x)}{\partial x_{n}}f_{n}(x,u).
\end{array}$$
If $ n=2p-1 $ then  put $$\begin{array}{l} F_{p+1}(x,u)=
\sum\limits_{i=1}^{n-2}\frac {\partial F_{p}(x)}{\partial
x_{i}}f_{i}(x_{1},\ldots,x_{i+2})+ \frac {\partial
F_{p}(x)}{\partial x_{n-1}}f_{n-1}(x,u)+\frac {\partial
F_p(x)}{\partial
x_n}f_n(x,u),\\[5pt]
\Phi_p(x,u)=\sum\limits_{i=1}^{n-2}\frac {\partial
\Phi_{p-1}(x_1,\ldots,x_{n-1})}{\partial
x_i}f_i(x_1,\ldots,x_{i+2}){+} \frac {\partial
\Phi_{p-1}(x_1,\ldots,x_{n-1})}{\partial
x_{n-1}}f_{n-1}(x,u).\end{array}
$$

For solvability of the system (\ref{skorik-r1_f22}) with respect
to $x_1,$ $\ldots,$ $x_n$  we require  that $\left|\frac{\partial
f_i(x_1,\ldots,x_{i+2})}{\partial x_{i+2}}\right| \ge a>0$ for
$i=1,\ldots,n-2.$ Analogously to the  paper \cite{korobov}, we
prove   the equalities
$$\begin{array}{l}
\displaystyle \frac{\partial F_k(x_1,\ldots,x_{2k-1})}{\partial
x_{2k-1}} =\prod\limits_{i=1}^{k-1} \frac{\partial
f_{2i-1}(x_1,\ldots,x_{2i+1})}{\partial x_{2i+1}},\quad
k=2,\ldots, p,  \\
 \displaystyle \frac{\partial
\Phi_k(x_1,\ldots,x_{2k})}{\partial x_{2k}} =
\prod\limits_{i=1}^{k-1} \frac{\partial
f_{2i}(x_1,\ldots,x_{2i+2})}{\partial x_{2i+2}},\quad
k=\left\{\!\!\begin{array}{lcl} 2,\ldots, p{-}1&\mbox{if}& n=2p{-}1,\\
2,\ldots, p&\mbox{if}& n=2p.\end{array}\right. \end{array}$$
Hence,
$$  \left|\frac{\partial F_k(x_1,\ldots,x_{2k-1})}{\partial
x_{2k-1}}\right|\ge a^{k-1}>0, \quad
 \left|\frac{\partial
\Phi_k(x_1,\ldots,x_{2k})}{\partial x_{2k}}\right| \ge a^{k-1}>0.
$$ Thus, the change of variables (\ref{skorik-r1_f22}) is nonsingular.

Let us explain how to solve  the system (\ref{skorik-r1_f22}) with
respect to $x_1,$ $\ldots,$ $x_n.$  At the beginning we have $
x_1=y_1= h_1(y_1),$ $x_2=z_1=h_2(z_1),$ according to the change of
variables. Suppose that for certain $k\ge 2$ the variables $x_1,$
$\ldots,$ $x_{2k-2}$ are found and have the form
$$\begin{array}{l}
x_3=h_3(y_1,y_2,z_1),\\
x_4=h_4(y_1,y_2,z_1,z_2),\\
.\qquad.\qquad.\qquad.\qquad.\\
 x_{2k-3} = h_{2k-3}(y_1,\ldots,y_{k-1},z_1,\ldots,z_{k-2}),\\
  x_{2k-2} = h_{2k-2}(y_1,\ldots,y_{k-1}, z_1,\ldots,z_{k-1}).
  \end{array}$$
Consider the functions $\widehat F(x_{2k-1})=
F_{k}(x_1,\ldots,x_{2k-2},x_{2k-1}),$ $\widehat\Phi(x_{2k})=
\Phi_{k}(x_1,\ldots,$ $x_{2k-1},x_{2k}).$ The  functions $\widehat
F(x_{2k-1}),$ $\widehat\Phi(x_{2k})$   are  one-to-one mappings of
$\Bbb R$ to $\Bbb R$. From
 the
equation
$$\begin{array}{c}
 y_k = F_k(x_1,\ldots,x_{2k-1}) =F_k\Bigl(h_1(y_1),h_2(z_1),\ldots,
 h_{2k-3}(y_1,\ldots\\
 \ldots,y_{k-1},z_1,\ldots,z_{k-2}), h_{2k-2}(y_1,\ldots,
 y_{k-1},z_1,\ldots,z_{k-1}),x_{2k-1}\Bigr),\end{array}$$
we find  $x_{2k-1}=h_{2k-1}(y_1,\ldots,y_k,z_1,\ldots,z_{k-1}).$
Substituting this expression to the equation
 $z_k   =
\Phi_k(h_1(y_1),h_2(z_1),\ldots,
h_{2k-2}(y_1,\ldots,y_{k-1},\ldots,z_1,\ldots,z_{k-1}), x_{2k-1},
x_{2k})$ we obtain
 $$\begin{array}{c} z_k   =\Phi_k(x_1,\ldots,x_{2k}) =
\Phi_k(h_1(y_1),h_2(z_1),\ldots, h_{2k-2}(y_1,\ldots\\
\ldots,y_{k-1},\ldots,z_1,\ldots,z_{k-1}),h_{2k-1}(y_1,\ldots,y_k,z_1,\ldots,z_{k-1}),
x_{2k}).\end{array}$$ From this equation we find   $x_{2k}=
h_{2k}(y_1,\ldots,y_k,z_1,\ldots,z_k).$

Thus, if  $n=2p$ then the nonsingular change of variables
(\ref{skorik-r1_f22}) maps  the system (\ref{skorik-r1_f20})  to
the system
\begin{equation} \label{skorik-chetnyi_slychai}
\left\{\begin{array}{l} \dot y_i=y_{i{+}1},\; i=1,\ldots,p{-}1,\;
\dot y_p=F_{p{+}1}\bigl(h_1(y_1),h_2(z_1),...,
 h_{2p-1}(y_1,...,y_p,\\
\qquad z_1,...,z_{p-1}), h_{2p}(y_1,\ldots,
 y_{p},z_1,...,z_p),u\bigr)\equiv H_1(y_1,...,y_p,z_1,...,z_p,u),\\
\dot z_i=z_{i{+}1},\; i=1,\ldots,p{-}1,\; \dot
z_p=\Phi_{p{+}1}(h_1(y_1),h_2(z_1),...,
h_{2p-1}(y_1,...,y_p,\\
\qquad z_1,...,z_{p-1}),h_{2p}(y_1,...,y_p,z_1,...,z_p),u)\equiv
H_2(y_1,...,y_p,z_1,...,z_p,u),
\end{array}\right.\end{equation}
and if $n=2p-1$ then one maps the system (\ref{skorik-r1_f20}) to
the system
\begin{equation} \label{skorik-kanonical_system_staicase}
\left\{\begin{array}{l} \dot y_i=y_{i+1},\quad
i=1,\ldots,p-1,\qquad
\dot y_p\quad=H_1(y_1,\ldots,y_p,z_1,\ldots,z_{p-1},u),\\
\dot z_i=z_{i+1},\quad i=1,\ldots,p-2,\qquad \dot
z_{p-1}=H_2(y_1,\ldots,y_p,z_1,\ldots,z_{p-1},u).
\end{array}\right.\end{equation}

In the partial case when the first $(n-2)$ equations of the system
(\ref{skorik-r1_f20})  are linear with respect to  the last
argument, i.e. the system has the form
$$\left\{ \begin{array}{lll}
\displaystyle   \dot x_i \quad\, =
f_i(x_1,\ldots,x_{i+1})+c_ix_{i+2},
   \qquad  i=1,\ldots,n{-}2,  \\
 \dot x_{n-1}= f_{n-1}(x_1,\ldots,x_n,u),   \quad
 \dot x_n =   f_n(x_1,\ldots,x_n,u),      \\
   \end{array}\right.\quad  \prod\limits_{i=1}^{n-2}c_i\ne 0, $$
 the  system (\ref{skorik-r1_f22})  is solvable  with respect to
$x_1,\ldots,x_n$ in an obvious way analogously to
\cite{skorik-sklyar_e_v}. For example, the nonsingular change of
variables $y_1=x_1,$ $y_2=x_3,$ $z_1=x_2,$ $z_2=x_1^2+x_4$ maps
the system
$$
\dot x_1=x_3,\,\dot x_2=x_1^2{+}x_4,\, \dot
x_3=f_1(x_1,x_2,x_3,x_4)\cos u,\, \dot
x_4=f_2(x_1,x_2,x_3,x_4)\sin u{-}2x_1x_3$$ to the system
$$\dot y_1=y_2,\;
\dot y_2=f_1(y_1,z_1,y_2,z_2-y_1^2)\cos u,\; \dot z_1=z_2,\; \dot
z_2=f_2(y_1,z_1,y_2,z_2-y_1^2)\sin u.$$

\subsection{\hskip-3.1mm}  For $n=2p-1\ge 2$  we  consider  the system
\begin{equation}  \label{skorik-r1_f23}
\left\{ \begin{array}{l}
  \quad\;\dot x_1\quad \; =  f_1(x_1,x_2,x_3),  \\
  \left\{\begin{array}{l}\dot x_{2i}\quad =   f_{2i}(x_1,\ldots,x_{2i+3}),  \\
 \dot x_{2i+1}= f_{2i+1}(x_1,\ldots,x_{2i+3}), \end{array}\right.\quad i=1,\ldots,p-2, \\
\quad\; \dot x_{n-1} \, = f_{n-1}(x_1,\ldots,x_n,u), \\\quad\;
\dot x_n \quad \, = f_n(x_1,\ldots,x_n,u).
\end{array}\right. \end{equation}

 Put $\varphi_1(x_1,\ldots,x_n)=x_1,$
 $\varphi_2(x_1,\ldots,x_n)=x_2.$ Then
\begin{equation}  \label{skorik-r1_f30}
\left\{\!\!\begin{array}{l}
y_1=x_1\equiv F_1(x_1), \\
 y_k= L_{a}^{k-1}\varphi_1
=\frac{d}{dt}F_{k-1}(x_{1},\ldots,x_{2k-3}) \equiv
F_k(x_1,\ldots,x_{2k-1}), \;
2\le k\le p,\\
z_1=x_2\equiv \Phi_1(x_1,x_2),\\
 z_k= L_{a}^{k-1}\varphi_2 =
 \frac{d}{dt}
\Phi_{k-1}(x_{1},\ldots,x_{2k-1})\equiv
\Phi_k(x_1,\ldots,x_{2k{+}1}),\;  2\le k\le
p{-}1,\end{array}\right. \end{equation}
 and  $ F_{p+1}(x,u)=\frac{d}{dt} F_p(x_1,\ldots,x_n),$ $
\Phi_p(x,u)=\frac{d}{dt} \Phi_{p-1}(x_1,\ldots,x_n). $ Following
the paper \cite{kou_elliot_tarn}, suppose that $$
\Delta_0=\left|\frac{\partial f_1}{\partial x_3}\right|\ge
\varepsilon_0,\quad  \Delta_i= \left|
 \frac{\partial f_{2i}}{\partial
x_{2i+2}}\frac{\partial f_{2i+1}}{\partial x_{2i+3}}-
\frac{\partial f_{2i}}{\partial x_{2i+3}} \frac{\partial
f_{2i+1}}{\partial x_{2i+2}} \right|\ge \varepsilon_i,\;
i=1,\ldots,p-2.$$ Then the change of variables
(\ref{skorik-r1_f30}) is nonsingular and maps the system
(\ref{skorik-r1_f23}) to the system
(\ref{skorik-kanonical_system_staicase}).

\subsection{\hskip-3.1mm} For $n=2p$ we  consider the system
\begin{equation} \label{skorik-r1_f41}
\left\{
\begin{array}{l}\!\!\!\left\{
\begin{array}{l}
 \dot x_{2i-1}  =  f_{2i-1}(x_1,\ldots,x_{2i+2}),  \\
 \dot x_{2i}\quad  =  f_{2i}(x_1,\ldots,x_{2i+2}),
 \end{array}\right.\quad   i=1,\ldots, p-1,\\
\;\;\,\dot x_{n-1}\,= f_{n-1}(x_{1},\ldots,x_n,u), \\
\;\;\,  \dot x_n\quad \,=  f_n(x_1,\ldots,x_n,u). \\
 \end{array}\right.  \end{equation}

Put $\varphi_1(x)=x_1,$ $\varphi_2(x)=x_2$ and
\begin{equation}  \label{skorik-r1_f42}
  y_k= L_{a}^{k-1}\varphi_1(x),\quad
 z_k= L^{k-1}_a\varphi_2(x),\quad  k=1,\ldots,p.
\end{equation}

 Suppose that
$$
\left|\! \displaystyle \frac{\partial
f_{2i{-}3}(x_1,\ldots,x_{2i})}{\partial x_{2i-1}}\displaystyle
\frac{\partial f_{2i{-}2}(x_1,\ldots,x_{2i})}{\partial x_{2i}}-
\displaystyle \frac{\partial
f_{2i{-}3}(x_1,\ldots,x_{2i})}{\partial x_{2i}}\displaystyle
\frac{\partial f_{2i{-}2}(x_1,\ldots,x_{2i})}{\partial x_{2i-1}}\!
\right|\ge \varepsilon $$  for $i=2,\ldots,p$ and $\varepsilon >
0.$
  Then the change of variables
(\ref{skorik-r1_f42}) is  nonsingular  and  maps the system
(\ref{skorik-r1_f41}) to  the system
 (\ref{skorik-chetnyi_slychai}).

\subsection{\hskip-3.1mm}  For a fixed $k$ such that $8\le 2k\le n{+}1$ we  consider the system
\begin{equation}  \label{skorik-r44_f1}
\left\{\begin{array}{l}
 \dot x_i\quad\,  =  f_i(x_{1},\ldots,x_{k}), \qquad\;\;\; i=1,\ldots,k{-}1,   \\
 \dot x_i\quad\,  =  f _i(x_{1},\ldots,x_{i+1}), \qquad  i=k,\ldots,n{-}3, \\
\dot x_{n-2}= f_{n-2}(x_{1},\ldots,x_{n}),\\
 \dot x_{n-1} =  f_{n-1}(x_{1},\ldots,x_{n},u), \\
 \dot x_n \quad =  f_{n}(x_{1},\ldots,x_{n},u). \\
 \end{array}\right.\end{equation}

  Put $\varphi_1(x)=x_1$ and $\varphi_2(x)=x_{n-k+1}.$
Then the  change of variables
$$ y_s=L^{s-1}_a
\varphi_1(x),\;\; s=2,\ldots,n{-}k{+}1,\quad  z_s=L^{s-1}_a
\varphi_2(x),\;\; s=2,\ldots,k{-}1,$$ maps the system
(\ref{skorik-r44_f1}) to the system
$$\left\{\begin{array}{lr}
\dot y_i=y_{i+1},\quad i=1,\ldots,n{-}k,&
\dot y_{n-k+1}=H_1(y_1,\ldots,y_{n-k+1},z_1,\ldots,z_{k-1},u),\\
\dot z_i=z_{i+1},\quad i=1,\ldots,k{-}2,& \dot
z_{k-1}=H_2(y_1,\ldots,y_{n-k+1},z_1,\ldots,z_{k-1},u).
\end{array}\right.$$

\end{document}